\theoremstyle{plain}
\newtheorem{theorem}{Theorem}[section]
\newtheorem{lemma}[theorem]{Lemma}
\newtheorem{proposition}[theorem]{Proposition}
\newtheorem{corollary}[theorem]{Corollary}
\theoremstyle{definition}
\newtheorem{definition}[theorem]{Definition}
\newtheorem{example}[theorem]{Example}
\newcommand{\xs}{x_1,\ldots,x_n}                %x_1,...,x_n
\newcommand{\vs}{v_1,\ldots,v_n}                %v_1,...,v_n
\newcommand{\Fs}{F_1,\ldots,F_q}                %F_1,...,F_q
\newcommand{\dimn}{ {\rm{dim}} \ }
\newcommand{\F}{{\mathcal{F}}}                  %Facet ideal of a complex
\newcommand{\C}{{\mathcal{C}}}       
\newcommand{\GG}{{\mathcal{G}}}                  %generators of ideal
\newcommand{\D}{\Delta}                         %Delta
\newcommand{\lcm}{{\mathop{\rm{lcm}}}}          %least common multiple
\newcommand{\ndiv}{\not |}              
\newcommand{\st}{\ | \ }                        % such that: | 
\newcommand{\tuple}[1]{\langle #1 \rangle}      % < ... >
\newcommand{\rmv}[1]{\setminus \langle #1\rangle}% remove a facet 
\newcommand{\void}[1]{}
\newcommand{\conn}{\mbox{conn}}
\newcommand{\oconn}{\overline{\mbox{conn}}}
\newcommand{\pd}{\mbox{projdim}}
\newcommand{\reg}{\mbox{reg}}
\newcommand{\cocoa}{\mbox{\rm C\kern-.13em o\kern-.07 em C\kern-.13em o\kern-.15em A}} 
\newcommand{\cocoax}{\mbox{C\kern-.13em o\kern-.07 em C\kern-.13em o\kern-.15em A}} 
\newcommand{\cocoal}{\mbox{\rm C\kern-.13em o\kern-.07 em C\kern-.13em o\kern-.15emA\kern-.1em L}}
\newcommand{\todo}[1]{\vspace{5 mm}\par \noindent
\marginpar{\textsc{ToDo}}
\framebox{\begin{minipage}[c]{0.95 \textwidth}
\tt #1 \end{minipage}}\vspace{5 mm}\par}
\renewcommand{\todo}[1]{}
\newcommand{\idiot}[1]{\vspace{5 mm}\par \noindent
%\marginpar{\textsc{Note}}
\framebox{\begin{minipage}[c]{0.95 \textwidth}
\tt #1 \end{minipage}}\vspace{5 mm}\par}
\renewcommand{\idiot}[1]{}
\newcommand{\sm}{\setminus}
\renewcommand{\leq}{\leqslant}          
\renewcommand{\geq}{\geqslant}
\date{\today}
\author{Sara Faridi\thanks{Department of Mathematics and Statistics,
    Dalhousie University, Halifax, Canada, faridi@mathstat.dal.ca.
    Research supported by NSERC. The author acknowledges the
    hospitality of MSRI in Berkeley, CA where part of this work was
    completed.}}
\title{\Large A Good Leaf Order on Simplicial Trees}
\begin{document}

\maketitle
\begin{abstract} Using the existence of a good leaf in every simplicial
 tree, we order the facets of a simplicial tree in order to find
 combinatorial information about the Betti numbers of its facet ideal.
 Applications include an Eliahou-Kervaire splitting of the ideal, as
 well as a refinement of a recursive formula of H\`a and Van Tuyl for
 computing the graded Betti numbers of simplicial trees.
 \end{abstract}

\begin{quotation}
 \begin{center} 
  \emph{Dedicated to Tony Geramita for his many contributions to Mathematics}
 \end{center}
\end{quotation}

%%%%%%%%%%%%%%%%%%%%%%%%%%%%%%%%%%%%%%%%%%%%%%%%%%%%%%%%%%%%
%%%%%%%%%%%%%%%%%%%%%%%%%%%%%%%%%%%%%%%%%%%%%%%%%%%%%%%%%%%%
\section{Introduction} 
 
Given a monomial ideal $I$ in a polynomial ring $R=k[\xs]$ over a
field $k$, a {\bf minimal free resolution} of $I$ is an exact sequence
of free $R$-modules
$$0\rightarrow {\displaystyle
  \bigoplus_{d}}R(-d)^{\beta_{p,d}}\rightarrow\cdots{\displaystyle
  \rightarrow\bigoplus_{d}}R(-d)^{\beta_{0,d}}\rightarrow I
\rightarrow 0$$ of $R /I $ in which $R(-d)$ denotes the graded free
module obtained by shifting the degrees of elements in $R$ by $d$. The
numbers $\beta_{i,d}$, which we shall refer to as the $i$-th
$\mathbb{N}$-\textbf{graded Betti numbers} of degree $d$ of $R /I$,
are independent of the choice of graded minimal finite free
resolution.

Questions about Betti numbers - including when they vanish and when
they do not, what bounds they have, how they relate to the base field
$k$ and what are the most effective ways to compute them - are of
particular interest in combinatorial commutative algebra. Via a method
called polarization~\cite{Fr}, it turns out that it is enough to
consider such questions for square-free monomial ideals~\cite{GPW};
i.e. a monomial ideal in which the generators are square-free monomials.

To a square-free monomial ideal $I$ one can associate a unique
simplicial complex called its facet complex. Conversely, every
simplicial complex has a unique monomial ideal assigned to it called
its facet ideal~\cite{F1}. Simplicial trees~\cite{F1} and related
structures were developed as a class of simplicial complexes that
generalize graph-trees, so that their facet ideals have similar
properties to those of edge ideals of graphs discovered in a series of
works by Villarreal and his coauthors~\cite{V}.

This paper offers an order on the monomials generating the facet
ideal of a simplicial tree which uses the existence of a ``good
  leaf'' in every simplicial tree~\cite{HHTZ}. This order in itself
is combinatorially interesting and useful, but it turns out that it
also produces a ``splitting''~\cite{EK} of the facet ideal of a tree which
gives bounds on the Betti numbers of the ideal. 

Our good leaf order also makes it possible to refine a recursive
formula of H\`a and Van Tuyl~\cite{HV} for computing Betti numbers of
facet ideals of simplicial trees, and to apply it to classes of trees
with strict good leaf orders. The idea here is that a good leaf order
will split an ideal to some extent, and within each one of these split
pieces, one can apply H\`a and Van Tuyl's formula quite efficiently if
the order is strict.

%%%%%%%%%%%%%%%%%%%%%%%%%%%%%%%%%%%%%%%%%%%%%%%%%%%%%%%%%%%%
%%%%%%%%%%%%%%%%%%%%%%%%%%%%%%%%%%%%%%%%%%%%%%%%%%%%%%%%%%%%
\section{Simplicial complexes, trees and forests}

 \begin{definition}[simplicial complexes] 
   A {\bf simplicial complex} $\D$ over a set of vertices $V(\D)=\{
   \vs \}$ is a collection of subsets of $V(\D)$, with the property
   that $\{ v_i \} \in \D$ for all $i$, and if $F \in \D$ then all
   subsets of $F$ are also in $\D$. An element of $\D$ is called a
   {\bf face} of $\D$, and the {\bf dimension} of a face $F$ of $\D$
   is defined as $|F| -1$, where $|F|$ is the number of vertices of
   $F$.  The faces of dimensions 0 and 1 are called {\bf vertices} and
   {\bf edges}, respectively, and $\dimn \emptyset =-1$.  The maximal
   faces of $\D$ under inclusion are called {\bf facets} of $\D$. The
   dimension of the simplicial complex $\D$ is the maximal dimension
   of its facets.  A {\bf subcollection} of $\D$ is a simplicial
   complex whose facets are also facets of $\D$.

   A simplicial complex $\D$ is {\bf connected} if for every pair of
   facets $F$, $G$ of $\D$, there exists a sequence of facets
   $F_1,\ldots,F_r$ of $\D$ such that $F_1=F$, $F_r=G$ and $F_s \cap
   F_{s+1} \neq \emptyset$ for $1 \leq s <r$.
   
   We use the notation $\tuple{\Fs}$ to denote the simplicial complex
   with facets $\Fs$, and we call it the simplicial complex {\bf
     generated by} $\Fs$.  By {\bf removing the facet} $F_i$ from $\D$
   we mean the simplicial complex $\D \rmv{F_i}$ which is generated by 
   $\{\Fs\} \sm \{F_i\}$.

 \end{definition}

\begin{definition}[Leaf, joint, simplicial trees and forests~\cite{F1}] 
A facet $F$ of a simplicial complex $\D$ is called a {\bf leaf} if
either $F$ is the only facet of $\D$, or $F$ intersects $\D \rmv{F}$ in
a face of $ \D \rmv{F}$. If $F$ is a leaf and $\D$ has more than one
facet, then for some facet $G \in \D \rmv{F}$ we have $F \cap H
\subseteq G$ for all $H\in \D \rmv{F}$.  Such a facet $G$ is
called a {\bf joint} of $F$.

A simplicial complex $\D$ is a {\bf simplicial forest} if every
nonempty subcollection of $\D$ has a leaf.  
 A connected simplicial forest is called a {\bf simplicial tree}.
\end{definition}

It follows easily from the definition that a leaf must always contain
at least one {\bf free vertex}, that is a vertex that belongs to no
other facet of $\D$.

\begin{example} The facets $F_0, F_2$ and $F_4$ are all leaves 
of the simplicial tree in Figure~\ref{f:glo}. The first two
have $F_1$ as a joint and $F_4$ has $F_3$ as a joint.
\end{example}

\begin{figure}
 \centering \includegraphics{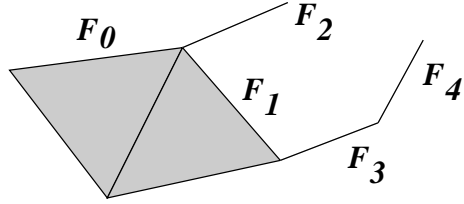}
\caption{Good leaves}\label{f:glo}
\end{figure}

\begin{definition}[Good leaf~\cite{Z,CFS}]\label{d:good-leaf} A facet $F$ of
a simplicial complex $\D$ is called a {\bf good leaf} of $\D$ if $F$
is a leaf of every subcollection of $\D$ which contains $F$.
\end{definition}

All leaves of the simplicial tree in Figure~\ref{f:glo} are good
leaves. Figure~\ref{f:nglo} contains an example of a leaf $F$ in a
simplicial tree which is not a good leaf: if we remove the facet $G$
then $F$ is no longer a leaf.

\begin{figure}
 \centering \includegraphics{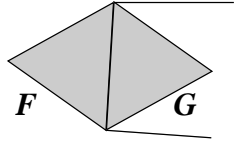}
\caption{A leaf that is not a good leaf}\label{f:nglo}
\end{figure}

Good leaves were studied in~\cite{Z} and then independently
in~\cite{CFS} (where they were called ``reducible leaves''). In both
sources the existence of such a leaf in every tree was conjectured but
not proved; the proof came later, using incidence matrices.

\begin{theorem}[\cite{HHTZ}] Every simplicial tree contains a good leaf.
\end{theorem}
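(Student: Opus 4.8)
The plan is to prove a stronger, more combinatorially robust statement: that in every simplicial forest $\D$ one can find a facet $F$ together with a single joint $G$ such that, after ordering the facets meeting $F$ appropriately, $F$ is a leaf in \emph{every} subcollection containing it. The natural way to control all subcollections simultaneously is to work with the \emph{intersection data} of the facets. For a facet $F$ of $\D$ with the other facets listed as $G_1, \ldots, G_m$, record the sets $F \cap G_j$. A leaf condition in a subcollection $\D'$ containing $F$ says exactly that the sets $F \cap G$, for $G \in \D'\rmv{F}$, have a maximum under inclusion; and $F$ is a good leaf precisely when these sets are \emph{totally ordered by inclusion} — for then every subfamily has a maximum. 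So the real goal is: find a facet $F$ such that $\{F \cap G : G \in \D, G \ne F\}$ is a chain under inclusion.

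First I would set up the incidence/intersection matrix of $\D$: a matrix whose rows are indexed by facets and whose columns are indexed by vertices, with a $1$ in position $(F,v)$ when $v \in F$. The condition that the sets $F \cap G_1, \ldots, F\cap G_m$ form a chain translates into a statement about how the row of $F$ overlaps the other rows. Next I would induct on the number of facets of $\D$. In the inductive step, pick any leaf $F_0$ of $\D$ (which exists since $\D$ is a forest) with a joint $G_0$; apply the inductive hypothesis to the forest $\D \rmv{F_0}$ to get a good leaf $F$ of $\D\rmv{F_0}$, and then the task is to reinsert $F_0$ and check that either $F$ is still good in $\D$, or $F_0$ itself can be taken as the good leaf. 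The key local fact to exploit is that $F_0$ intersects the rest of $\D$ inside a single joint $G_0$, so $F_0$ contributes at most one ``new'' comparison to the chain condition for any other facet, and $F \cap F_0 \subseteq F \cap G_0$, which is already a member of the chain for $F$ in $\D\rmv{F_0}$.

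The main obstacle is that reinserting $F_0$ can destroy the chain property of $F$: the new set $F\cap F_0$ need not be comparable with all the sets $F\cap G_j$ already in the chain — we only know it sits below $F \cap G_0$. The resolution I anticipate is a dichotomy argument. If $F\cap F_0$ \emph{is} comparable to everything, $F$ remains a good leaf of $\D$ and we are done. Otherwise there is some $G_j$ with $F\cap F_0$ and $F\cap G_j$ incomparable; using $F\cap F_0 \subseteq F\cap G_0$ one then shows $G_0 \ne G_j$ and extracts enough structure to conclude that $F_0$ — whose only nontrivial intersections route through $G_0$ — has \emph{its} intersection sets $\{F_0 \cap G : G \in \D\}$ totally ordered, because they are all governed by the single vertex set $F_0 \cap G_0$ and the chain already established for $F$. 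In other words, the failure of $F$ forces $F_0$ to succeed. Making this dichotomy precise — showing that one of the two facets always has the chain property, with a careful bookkeeping of which intersections can be incomparable — is where the incidence-matrix formalism of \cite{HHTZ} does the real work, and it is the step I would expect to occupy most of the argument.
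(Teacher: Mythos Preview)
The paper does not prove this theorem; it is quoted from \cite{HHTZ}, with the only remark that ``the proof came later, using incidence matrices.'' So there is no proof in the paper to compare against, and I can only assess your proposal on its own terms.

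Your reformulation is correct: $F$ is a good leaf if and only if $\{F\cap G : G\neq F\}$ is totally ordered by inclusion. The inductive framework is also natural. The genuine gap is the dichotomy step. You assert that if the inductively obtained good leaf $F$ of $\D\rmv{F_0}$ fails to remain good in $\D$, then $F_0$ itself must be a good leaf of $\D$. This is false, not merely unproved. Take
\[
F_0=\{a,b,c\},\quad G_0=\{b,c,d,e\},\quad F_1=\{b,d,f\},\quad F_2=\{c,e,g\},\quad F_3=\{d,h\}.
\]
One checks that $\D=\tuple{F_0,G_0,F_1,F_2,F_3}$ is a simplicial tree and that $F_0$ is a leaf with joint $G_0$. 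In $\D\rmv{F_0}$ the facet $F_1$ is a good leaf, since its intersections form the chain $\{b,d\}\supseteq\{d\}\supseteq\emptyset$. But in $\D$ we have $F_1\cap F_0=\{b\}$ and $F_1\cap F_3=\{d\}$ incomparable, so $F_1$ is not a good leaf of $\D$; and $F_0\cap F_1=\{b\}$, $F_0\cap F_2=\{c\}$ are incomparable, so $F_0$ is not a good leaf either. The actual good leaves of $\D$ are $F_2$ and $F_3$, neither of which your two-candidate dichotomy ever examines.

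Your justification for the dichotomy --- that the intersections $F_0\cap G$ are ``all governed by the single vertex set $F_0\cap G_0$ and the chain already established for $F$'' --- does not work: being contained in a common set $F_0\cap G_0$ does not force a chain, and the chain data attached to $F$ says nothing about comparability among the sets $F_0\cap G$. The induction as set up does not close, and some genuinely global bookkeeping (as in the incidence-matrix argument of \cite{HHTZ}) is needed rather than a local two-facet alternative.
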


\begin{definition}[Facet ideal, facet complex~\cite{F1}] Let $\D$ be a
  simplicial complex with vertex set $\{\xs\}$, and let $R=k[\xs]$ be
  a polynomial ring over a field $k$ with variables corresponding to
  the vertices of $\D$.  The {\bf facet ideal} of $\D$, denoted by
  $\F(\D)$, is an ideal of $R$ whose generators are monomials, each of
  which is the products of the variables labeling the vertices of a
  facet of $\D$. Given a square-free monomial ideal $I$ in $R$, the
  {\bf facet complex of $I$} is the simplicial complex whose facets
  are the set of variables appearing in each monomial generator of
  $I$.
\end{definition}

\begin{example} If $I=(xy,yzu,xz)$ is a monomial ideal in
  $R=k[x,y,z,u]$, its facet complex is the simplicial complex
  $\D=\tuple{\{x,y\},\{y,z,u\},\{x,z\}}$. Similarly $I$ is the facet
  ideal of $\D$.
\end{example}

It is clear from the definition and example that every square-free
monomial ideal has a unique facet complex, and every simplicial
complex has a unique facet ideal.  Because of this one-to-one
correspondence we often abuse notation and use facets and monomials
interchangeably. For example we say $F\cup G = \lcm(F,G)$ to imply the
union of two facets $F$ and $G$ or the least common multiple of two
monomials [corresponding to the facets] $F$ and $G$.

Trees behave well under localization:

\begin{lemma}[Localization of a tree is a
  forest~\cite{F1}]\label{l:localization}
  Let $\Delta$ be a simplicial tree with vertices $\xs$, and let $I$
  be the facet ideal of $\D$ in the polynomial ring $R=k[\xs]$ where
  $k$ is a field. Then for any prime ideal $p$ of $R$, $I_p$ is the
  facet ideal of a simplicial forest. \end{lemma}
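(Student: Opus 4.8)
The plan is to reduce the statement to a purely combinatorial operation on $\D$ and then to verify the forest property directly from the definition. Localization of a monomial ideal at a prime $p$ depends only on which variables lie in $p$, so set $W=\{x_i : x_i\in p\}$. Each variable not in $W$ is a unit in $R_p$; hence, if $F_1,\ldots,F_q$ are the facets of $\D$ and $M_i=\prod_{x\in F_i}x$ is the corresponding generator of $I$, then $(M_i)R_p=\bigl(\prod_{x\in F_i\cap W}x\bigr)R_p$, so $I_p=JR_p$ where $J=\bigl(\prod_{x\in F_i\cap W}x : 1\leq i\leq q\bigr)$. If some facet $F_i$ satisfies $F_i\cap W=\es$, then $J=R$ and $I_p=R_p$ and there is nothing to prove; so assume $F_i\cap W\neq\es$ for all $i$. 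Then $J$ is a squarefree monomial ideal whose minimal generators correspond to the \emph{minimal} members, under inclusion, of $\{F_i\cap W : 1\leq i\leq q\}$; equivalently $J=\F(\D_W)$, where $\D_W$ is the simplicial complex on a subset of $W$ whose facets are exactly those minimal sets. It remains to show that $\D_W$ is a simplicial forest.

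Let $\Gamma$ be an arbitrary nonempty subcollection of $\D_W$; we must exhibit a leaf of $\Gamma$. Each facet $F'$ of $\Gamma$ is a facet of $\D_W$, hence of the form $\widehat{F'}\cap W$ for some facet $\widehat{F'}$ of $\D$; fix one such lift for every facet $F'$ of $\Gamma$. Since $\widehat{F'}\cap W$ recovers $F'$, distinct facets of $\Gamma$ have distinct lifts, which are pairwise incomparable facets of $\D$, so $\widehat\Gamma:=\tuple{\widehat{F'} : F'\text{ a facet of }\Gamma}$ is a subcollection of $\D$ whose facets are precisely the chosen lifts. As $\D$ is a forest, $\widehat\Gamma$ has a leaf $\widehat{F'_0}$. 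If $\widehat{F'_0}$ is the only facet of $\widehat\Gamma$, then $F'_0$ is the only facet of $\Gamma$ and we are done. Otherwise $\widehat{F'_0}$ has a joint $\widehat{F''}$ in $\widehat\Gamma$, so $\widehat{F'_0}\cap\widehat{H'}\subseteq\widehat{F''}$ for every facet $\widehat{H'}$ of $\widehat\Gamma$; intersecting with $W$ gives $F'_0\cap H'=\widehat{F'_0}\cap\widehat{H'}\cap W\subseteq\widehat{F''}\cap W=F''$ for every facet $H'$ of $\Gamma$. Here $F''=\widehat{F''}\cap W$ is a facet of $\Gamma$ (the facets of $\widehat\Gamma$ are the lifts of those of $\Gamma$), and $F''\neq F'_0$ since lifting is injective and a joint differs from its leaf. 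Hence $F'_0\cap\bigl(\bigcup_{H'\neq F'_0}H'\bigr)\subseteq F''$ is a face of $\Gamma\rmv{F'_0}$, so $F'_0$ is a leaf of $\Gamma$. Since $\Gamma$ was arbitrary, $\D_W$ is a simplicial forest.

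The verification in the second paragraph is routine once the framework is set up; the real work is in the reduction. One must identify $I_p$ correctly with a facet ideal --- noting that the facets of $\D_W$ are the \emph{minimal}, not maximal, sets among the $F_i\cap W$ --- and one must separately dispose of the degenerate case $F_i\subseteq V(\D)\sm W$, where $I_p$ is the unit ideal. A secondary point is to confirm in the lifting step that the chosen lifts are genuinely distinct facets of $\D$, so that $\widehat\Gamma$ is a bona fide subcollection to which the forest hypothesis applies.
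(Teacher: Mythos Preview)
The paper does not supply a proof of this lemma; it is quoted from~\cite{F1} as a known result. Your argument is correct and follows the standard route: reduce localization at an arbitrary prime $p$ to the combinatorial operation of intersecting each facet with $W=\{x_i:x_i\in p\}$ (using that a monomial lies in $p$ iff one of its variables does), identify the facets of $\D_W$ as the inclusion-minimal members of $\{F_i\cap W\}$, and verify the forest property by lifting any subcollection of $\D_W$ to a subcollection of $\D$ and pushing the leaf/joint relation down again by intersecting with $W$. The care you take to confirm that distinct facets of $\Gamma$ have distinct lifts, and to dispose of the degenerate case $F_i\cap W=\emptyset$, is appropriate; both points are exactly where a sloppy write-up would leave a gap.
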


For a simplicial complex $\D$ with a facet $F$, we use the notation
$\D_{\overline{F}}$ for facet complex of the localization $\F(\D)$ at the ideal
generated by the the complement of the facet $F$.
%%%%%%%%%%%%%%%%%%%%%%%%%%%%%%%%%%%%%%%%%%%%%%%%%%%%%%%%%%%%
%%%%%%%%%%%%%%%%%%%%%%%%%%%%%%%%%%%%%%%%%%%%%%%%%%%%%%%%%%%%

\section{Good leaf orders} 

From its definition it is immediate that a good leaf $F_0$ of a tree
$\D$ induces an order $F_0, F_1,\ldots,F_q$ on the facets of $\D$ so
that $$F_0\cap F_1 \supseteq F_0 \cap F_2 \supseteq \cdots \supseteq
F_0\cap F_q.$$ Our goal in this section is to demonstrate that this
order can be refined so that $\D$ is built leaf by leaf starting from
the good leaf $F_0$. In other words, the order can be written so that
for $i\leq q$, $F_i$ is a leaf of $\D_i=\tuple{F_0,\ldots,F_i}$. Such
an order on the facets of $\D$ will be called a {\bf good leaf order}
on $\D$.

\begin{example} Let $\D$ be the simplicial tree in Figure~\ref{f:glo}. 
Then $F_0$ is a good leaf and the labeling of facets $F_0,\ldots,F_4$
is a good leaf order on $\D$, since $F_0 \cap F_1 \supseteq \cdots
\supseteq F_0 \cap F_4$. Note that even though $F_0 \cap F_1 \supseteq
F_0 \cap F_2 \supseteq F_0 \cap F_4 \supseteq F_0 \cap F_3$, this
latter order $F_0,F_1,F_2,F_4.F_3$ is not a good leaf order since
$F_3$ is not a leaf of $\D$.
\end{example}

We show that every simplicial tree (forest) has a good leaf order.

\begin{lemma}\label{l:three-indices} Suppose $\D= \langle F,
  G, H \rangle$ is a simplicial tree with $F \cap G \not \subseteq H$
  and $F \cap H \not \subseteq G$. Then $G$ and $H$ are the leaves of
  the tree $\D$ and $F$ is the common joint so that $G \cap H
  \subseteq F$.
\end{lemma}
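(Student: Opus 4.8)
The plan is to use the defining property of simplicial trees — that every nonempty subcollection has a leaf — applied to $\D$ itself and to its $2$-facet subcollections, together with the hypotheses $F\cap G\not\subseteq H$ and $F\cap H\not\subseteq G$ to rule out the bad cases. First I would observe that $\D$ has at least two leaves: indeed, $\D$ has a leaf $L_1$, and by removing $L_1$ the remaining (two-facet) subcollection has a leaf, but in fact for a tree with $\geq 2$ facets one can argue that some facet other than $L_1$ is also a leaf of $\D$ (alternatively, just enumerate: among $\{F,G,H\}$ some facet is a leaf of $\D$, and I will show it cannot be $F$).

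Next I would show $F$ is \emph{not} a leaf of $\D$. If $F$ were a leaf, then since $\D$ has more than one facet, $F$ would have a joint among $\{G,H\}$. If $G$ is a joint of $F$, then by definition $F\cap H\subseteq G$, contradicting the hypothesis $F\cap H\not\subseteq G$. If $H$ is a joint of $F$, then $F\cap G\subseteq H$, contradicting $F\cap G\not\subseteq H$. So $F$ is not a leaf of $\D$. Since $\D$ is a tree and therefore has a leaf among its three facets, the leaves of $\D$ lie in $\{G,H\}$; and since removing a leaf from a tree leaves a (nonempty) forest which again has a leaf, I would conclude that in fact \emph{both} $G$ and $H$ are leaves of $\D$ (if only $G$ were a leaf, removing $G$ would leave $\langle F,H\rangle$, which has a leaf, but that leaf together with the two-leaf count forces $H$ to be a leaf of $\D$ as well; one should check this little bookkeeping carefully).

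Finally I would extract the joint information. Since $G$ is a leaf of $\D$ and $\D$ has more than one facet, $G$ has a joint, which is $F$ or $H$; if the joint were $H$ then $G\cap F\subseteq H$, again contradicting $F\cap G\not\subseteq H$. Hence $F$ is the joint of the leaf $G$, so $G\cap H\subseteq F$. Symmetrically (or by the same argument applied to the leaf $H$, whose joint cannot be $G$ since $F\cap H\not\subseteq G$), $F$ is also the joint of $H$. This gives $G\cap H\subseteq F$ and establishes that $F$ is the common joint, completing the proof.

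The main obstacle I anticipate is the bookkeeping step showing that \emph{both} $G$ and $H$ are leaves (not just ``a leaf exists among them''). The cleanest route is probably to avoid that step entirely: argue directly that $G$ is a leaf (the tree $\D$ has a leaf, it is not $F$, so WLOG $G$ is a leaf), deduce $F$ is its joint and hence $G\cap H\subseteq F$ as above, and then note that removing $G$ leaves the tree $\langle F,H\rangle$ in which $H$ must be a leaf with joint $F$ — but to promote this to ``$H$ is a leaf of $\D$'' one checks that $H\cap G\subseteq F$ (already shown) means $H$ intersects $\D\rmv{H}=\langle F,G\rangle$ in $H\cap F$, a face of $\langle F,G\rangle$ since $H\cap F\subseteq F$. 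That last observation is the crux and is short once the joint relation $G\cap H\subseteq F$ is in hand.
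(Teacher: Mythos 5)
Your proof is correct and follows essentially the same route as the paper: $F$ cannot be a leaf, since either choice of joint for it would violate one of the two hypotheses, and the joint of each of $G$ and $H$ must then be $F$, giving $G\cap H\subseteq F$. The only difference is that the paper concludes that $G$ and $H$ are both leaves by invoking the fact that a simplicial tree with at least two facets has two leaves, whereas your ``cleanest route'' verifies directly that $H$ is a leaf of $\D$ from the already-established relation $G\cap H\subseteq F$ (so that $H$ meets $\langle F,G\rangle$ in the face $H\cap F$); both are fine, and your version has the minor advantage of not needing the two-leaf theorem.
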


      \begin{proof}  
        If $F$ is a leaf, then either $F \cap G \subseteq H$ or $F \cap
        H \subseteq G$.  Either case is a contradiction, so the two
        leaves of the tree have to be $G$ and $H$.  If $H$ is a joint
        of the leaf $G$ then $F \cap G \subseteq H$ which is again a
        contradiction, so $F$ is the joint of $G$. Similarly, $F$ is
        the joint of $H$, and we have $G \cap H \subseteq F$.
      \end{proof}

\begin{proposition}[First step to build good leaf order]\label{p:glo} 
Let $\D$ be a simplicial tree with a good leaf $F_0$ and good leaf
order $$F_0\cap F_1 \supseteq F_0 \cap F_2 \supseteq \cdots \supseteq
F_0\cap F_q.$$ Let $1 \leq a \leq q$ and $0\leq b < a$ and
$$F_0 \cap F_{a-b-1} \supsetneq F_0\cap F_{a-b}=\cdots= F_0 \cap F_a.$$
Then one of $F_{a-b},\ldots,F_{a}$ is
a leaf of $\langle F_0,\ldots, F_{a}\rangle$.
\end{proposition}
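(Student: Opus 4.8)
The plan is to induct on $b$. When $b=0$ the statement says that $F_a$ is a leaf of $\langle F_0,\ldots,F_a\rangle$, which follows directly from the fact that $F_0$ is a good leaf: since $F_0\cap F_{a-1}\supsetneq F_0\cap F_a$, the facet $F_a$ is the ``last'' facet in the good leaf order restricted to $\langle F_0,\ldots,F_a\rangle$, and more carefully, $F_0$ being a good leaf of this subcollection forces $F_a$ to be a leaf of it. (If $F_0$ were the only facet this is trivial; otherwise $F_0$ is a leaf, pick a joint, and a short argument using the nesting $F_0\cap F_i\supseteq F_0\cap F_a$ for $i<a$ shows $F_a$ intersects the rest in a face.)

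For the inductive step, suppose $b\geq 1$ and the result holds for $b-1$. Consider the subcollection $\Gamma=\langle F_0,\ldots,F_a\rangle$; by the induction hypothesis applied to $\langle F_0,\ldots,F_{a-1}\rangle$ (with the same $a-b$, now with $b-1$ equalities at the top level $F_0\cap F_{a-b}=\cdots=F_0\cap F_{a-1}$), one of $F_{a-b},\ldots,F_{a-1}$, say $F_j$, is a leaf of $\langle F_0,\ldots,F_{a-1}\rangle$. Now I want to bring $F_a$ back into the picture. Either $F_j$ is still a leaf of $\Gamma$ — in which case we are done, since $a-b\leq j\leq a-1$ — or adding $F_a$ destroys the leaf property of $F_j$. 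In the latter case I will argue that $F_a$ itself must be a leaf of $\Gamma$. The key tool here is Lemma~\ref{l:three-indices}: the obstruction to $F_j$ being a leaf of $\Gamma$ is witnessed by $F_a$ together with the old joint structure, and one extracts a sub-tree on three facets (a joint $F_\ell$ with $\ell<a-b$, together with $F_j$ and $F_a$, or their relevant ``merged'' faces) in which $F_j\cap F_\ell\not\subseteq F_a$ and $F_j\cap F_a\not\subseteq F_\ell$; Lemma~\ref{l:three-indices} then forces $F_a$ (and $F_j$) to be leaves and $F_\ell$ the common joint. The equalities $F_0\cap F_{a-b}=\cdots=F_0\cap F_a$ are exactly what is needed to control how the intersections with $F_0$ interact, ensuring the hypotheses of Lemma~\ref{l:three-indices} are met.

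I expect the main obstacle to be the bookkeeping in the inductive step: precisely identifying, when $F_a$ kills the leaf $F_j$, which triple of facets (or unions of facets) to feed into Lemma~\ref{l:three-indices}, and verifying the two non-containment hypotheses. The subtlety is that one is not working inside a tree on three facets but inside the larger subcollection $\Gamma$, so one must pass to an appropriate subcollection or use the forest hypothesis to locate a leaf of $\Gamma$ and analyze its joint. One clean way to organize this: let $F$ be any leaf of $\Gamma$ (which exists since $\Gamma$ is a subcollection of a forest). If $F\in\{F_{a-b},\ldots,F_a\}$ we are done. Otherwise $F=F_\ell$ with $\ell<a-b$, so $F_0\cap F_\ell\supsetneq F_0\cap F_a$; then using that $F_\ell$ is a leaf with some joint $G$, and comparing $F_0\cap F_\ell$ with $F_0\cap G$ and $F_0\cap F_a$, derive a contradiction with $F_0$ being a good leaf of $\Gamma$ — because $F_0\cap F_\ell$ would then have to be contained in $F_0\cap G$, contradicting the strict drop. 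This shows the leaf of $\Gamma$ cannot have small index, completing the proof.
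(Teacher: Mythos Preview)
Your proposal has real gaps in all three of its pieces.

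\textbf{Base case $b=0$.} You assert that ``$F_0$ being a good leaf of this subcollection forces $F_a$ to be a leaf of it,'' but this is exactly the content to be proved, not a consequence of the definition. Knowing that $F_0$ is a leaf of every subcollection tells you which facet $F_0$ intersects most, not whether $F_a$ intersects $\D_{a-1}$ in a single face. The paper's proof of this very case (its Case~1) is not short: it removes $F_{a-1}$, applies an inductive hypothesis on $a$ to the smaller tree $\langle F_0,\ldots,F_{a-2},F_a\rangle$, and then uses Lemma~\ref{l:three-indices} to rule out the possibility that reinserting $F_{a-1}$ destroys $F_a$'s leaf status. Your induction is on $b$, so at $b=0$ you have no inductive hypothesis available and must supply a full argument here.

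\textbf{The inductive step.} The dichotomy ``either $F_j$ remains a leaf of $\Gamma$ or $F_a$ becomes one'' is not established. You gesture at feeding a triple into Lemma~\ref{l:three-indices}, but to invoke that lemma you need both $F_j\cap F_\ell\not\subseteq F_a$ and $F_j\cap F_a\not\subseteq F_\ell$ for a specific $F_\ell$, and even then the conclusion is only that $F_a$ is a leaf of $\langle F_\ell,F_j,F_a\rangle$, not of all of $\Gamma$. The paper's argument for $b>0$ is quite different: it removes each $F_i$ in turn, uses pigeonhole to find two subcollections $\Gamma_i,\Gamma_j$ sharing a leaf $F_u$, and then compares their joints.

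\textbf{The ``clean way.''} This is simply false as stated. Take $F_0=\{1,2,3,4\}$, $F_1=\{1,2,3,5\}$, $F_2=\{1,2,6\}$, $F_3=\{1,7\}$, with $a=3$, $b=0$. Here $F_1$ is a leaf of $\Gamma$ (with joint $F_0$) and $\ell=1<a-b=3$, yet there is no contradiction: your argument gives $F_0\cap F_1\subseteq F_0\cap F_0=F_0$, which is vacuous. More generally, when the joint of $F_\ell$ is $F_m$ with $m\leq\ell$, the containment $F_0\cap F_\ell\subseteq F_0\cap F_m$ is perfectly compatible with the chain and yields no ``strict drop'' contradiction. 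So an arbitrary leaf of $\Gamma$ need not lie in $\{F_{a-b},\ldots,F_a\}$, and you cannot conclude by picking just one.
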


\begin{proof} Let $\Gamma=\langle F_0, \ldots, F_a\rangle$.  The
   subcollection $\Omega=\tuple{F_0,\ldots,F_{a-b-1}}$ of $\Gamma$ is
   connected as all facets have nonempty intersection with
   $F_0$. If $\Gamma$ is disconnected, $\Omega$ will be
   contained in one of the connected components of $\Gamma$, and there
   will be another connected component $\Sigma$ whose facets are from
   $F_{a-b},\ldots, F_a$. Since $\Sigma$ is a subcollection of a tree,
   it must have a leaf, and that leaf will be a leaf of $\Gamma$ as
   well. So one of $F_{a-b},\ldots,F_{a}$ will be a leaf of $\Gamma$.

  We now assume that $\Gamma$ is connected and proceed by induction on
  $a$ to prove our claim.  If $a=1$ then clearly $F_1$ is a leaf of the
  tree $\Gamma=\langle F_0,F_1\rangle$. If $a=2$ then since $F_2 \cap
  F_0 \subset F_1$, the facet $F_2$ must be a leaf with joint $F_1$.

  Now suppose that $a>2$ and the statement is true up to the
  $(a-1)$-st step. If $a-b=1$ then  $$F_0\cap F_1=F_0\cap F_2=\cdots
  =F_0\cap F_a.$$ By \cite{F2} Lemma~4.1 we know that
  $\Gamma$ must have two leaves, and so one of the facets $F_1,\ldots,
  F_a$ is a leaf. 

   We assume that $a-b\geq 2$ and neither one of $F_a,\ldots,F_{a-b}$
   is a leaf of $\Gamma$.  There are two possible cases.

   \begin{enumerate}
    \item The case $b=0$. Then $F_0 \cap F_{a-1} \supsetneq F_0 \cap
      F_a$.  If $\Gamma'=\langle F_0,\ldots,F_{a-2},F_a\rangle$ then
      are two scenarios.
      \begin{enumerate}
      \item If $\Gamma'$ is disconnected,
      then the facet $F_a$ alone is a connected component of $\Gamma'$
      (since all other facets intersect $F_0$) and therefore $F_a$ is
      a leaf of $\Gamma'$ and $F_a \cap F_i=\emptyset$ for
      $i=0,\ldots,a-2$. Since $\Gamma$ is connected, $F_{a-1} \cap
      F_a\neq \emptyset$, and therefore $F_a$ is a leaf of $\Gamma$
      with joint $F_{a-1}$. 

      \item If $\Gamma'$ is connected, we apply the induction
        hypothesis to the tree $\Gamma'$ with good leaf $F_0$. In the
        ordering of the facets of $\Gamma'$, $F_a$ can only be at the
        right end of the sequence (since $F_0\cap F_{a-2} \supsetneq
        F_0 \cap F_a$). So $F_a$ is a leaf of $\Gamma'$ and hence
        there is a joint $F_j\in \{F_0, \dots, F_{a-2}\}$ such that
        $F_a \cap F_k \subseteq F_j$ for all $F_k \in \{F_0, \dots,
        F_{a-2}\}$.

      If $F_a$ is not a leaf of $\Gamma$ then $F_a \cap F_{a-1} \not
      \subseteq F_j$. It also follows that $F_a \cap F_j \not
      \subseteq F_{a-1}$, as otherwise $F_{a-1}$ would be a joint of
      $F_a$. Therefore, we can now apply Lemma~\ref{l:three-indices}
      to the tree $\langle F_j, F_{a-1}, F_a\rangle$ to conclude that
      $F_j \cap F_{a-1} \subseteq F_a$. It follows that $$ F_0 \cap
      F_j \cap F_{a-1} \subseteq F_0 \cap F_a \Longrightarrow F_0 \cap
      F_{a-1} \subseteq F_0 \cap F_a \subsetneq F_{a-1} \cap F_0$$
      which is a contradiction. So $F_a$ has to be a leaf of $\Gamma$
      and we are done.
      \end{enumerate}

   \item The case $b>0$. We keep the good leaf $F_0$ and generate
     complexes $\Gamma_i=\Gamma \rmv{ F_i}$ for $i \in \{
     1,\ldots,a\}$.  By induction hypothesis each $\Gamma_i$ has a
     leaf $F_{u_i}$ where $u_i \in \{ a-b,\ldots,\hat{i},\ldots,
     a\}$. Since there are a total of $b+1$ facets that can be leaves
     of the $\Gamma_i$, and there are $a>b+1$ of the complexes
     $\Gamma_i$ (recall that we are assuming $a-b \geq 2$), we must
     have $u_i=u_j=u$ for some distinct $i,j \in \{
     1,\ldots,a\}$. Suppose $F_{v_i}$ and $F_{v_j}$ are the joints of
     $F_u$ in $\Gamma_i$ and $\Gamma_j$, respectively. So we have
   \begin{align}
    F_u\cap F_h \subseteq F_{v_i}& \mbox{ if } h\neq i \notag\\
    F_u\cap F_h \subseteq F_{v_j}& \mbox{ if } h\neq j.\label{e:u-leaf}
   \end{align}
   
   These two embeddings imply  that
   \begin{align}
    F_u\cap F_j \subseteq F_{v_i}\cap F_u \subseteq F_{v_j} & \mbox{
      if } v_i\neq j\notag\\ 
    F_u\cap F_i \subseteq F_{v_j}
    \cap F_u \subseteq F_{v_i}& \mbox{ if } v_j\neq i.\label{e:u-leaf-general}
   \end{align}

   Suppose $v_i \neq j$. Then from (\ref{e:u-leaf}) and
   (\ref{e:u-leaf-general}) we can see that $F_u$ is a leaf of
   $\Gamma$ with joint $F_{v_j}$. Similarly $F_u$ is a leaf of
   $\Gamma$ if $v_j\neq i$. So $F_u$ is a leaf of $\Gamma$ unless
   $v_i=j$ and $v_j=i$ are the only possible joints for $F_u$ in
   $\Gamma_i$ and $\Gamma_j$, respectively.  In this case
   (\ref{e:u-leaf}) turns into
   \begin{align}
    F_u\cap F_h \subseteq F_{j} & \mbox{ if } h\neq i \notag\\
    F_u\cap F_h \subseteq F_{i} & \mbox{ if } h\neq j. \label{e:ij-form}
   \end{align}

   Now consider $\Gamma_u=\D \rmv{ F_u}$, which by induction hypothesis
   must have a leaf $F_v$ with $v \in \{a-b,\ldots,a\}\sm \{u
   \}$ and a joint $F_t$. Since $F_i, F_j \in \Gamma_u$, we must have
   \begin{align} 
    F_i\cap F_v \subseteq F_t & \mbox{ if } v\neq i \notag \\
    F_j\cap F_v \subseteq F_t & \mbox{ if } v\neq j. \label{e:vt-form}
    \end{align}

    Once again, we consider two cases.
    \begin{enumerate}
    \item If $v$ can be selected outside $\{i,j\}$, we combine
      (\ref{e:vt-form}) with (\ref{e:ij-form}) to get
        $$F_u\cap F_v \subseteq F_{j} \cap F_v \subseteq F_t$$ meaning
      that $F_v$ is a leaf of $\Gamma$.
   
    \item If $v$ must be in $\{i,j\}$, then the only leaves of $\Gamma_u$ are
      $F_i$ and $F_j$.  As $F_0\in \Gamma_u$ is a good leaf of $\D$,
      one of $i$ and $j$ must be $0$, say $j=0$. But now we have $$F_u
      \cap F_j=F_u\cap F_0 \subseteq F_i$$ which together with
      (\ref{e:ij-form}) implies that $F_u$ is a leaf of $\Gamma$ with
      joint $F_i$.
    \end{enumerate}
      \end{enumerate}
   \end{proof}

Our main theorem is now just a direct consequence of
Proposition~\ref{p:glo}, with a bit more added to it.

\begin{theorem}[Main theorem: good leaf orders]\label{t:glo} 
Let $\D$ be a simplicial tree with a good leaf $F_0$. Then there is an
order $F_0,F_1,\ldots,F_q$ on the facets of $\D$ such that
\begin{enumerate}
\item $F_0\cap F_1 \supseteq F_0 \cap F_2
\supseteq \cdots \supseteq F_0\cap F_q$, and 
\item The facet $F_i$ is a leaf of $\D_i=\langle F_0,\ldots, F_i\rangle$ for
  $0 \leq i \leq q$.
\item The facet $F_{i-1}$ is a either a leaf of $\D_i$ with the same
  joint as it has in $\D_{i-1}$, or it is the unique joint of $F_{i}$ in
  $\D_i$, for $1 \leq i \leq q$.
\item $\D_i=\langle F_0,\ldots, F_i\rangle$ is connected for $0 \leq i
  \leq q$.
\end{enumerate}
\end{theorem}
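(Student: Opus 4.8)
The plan is to build the order greedily from the good leaf $F_0$, repeatedly invoking Proposition~\ref{p:glo} to find the next facet to append, and then carefully tracking the joint data to get conditions (3) and (4). Here is the structure I would follow. First, partition $\{F_1,\ldots,F_q\}$ into the ``levels'' determined by the equivalence relation $F_i \sim F_j$ iff $F_0\cap F_i = F_0\cap F_j$; within each level the facets may be reordered freely without violating condition (1). The construction is by induction on $i$: having already fixed $F_0,\ldots,F_{i-1}$ so that (1)--(4) hold for $\D_{i-1}$, we must choose $F_i$ from the remaining facets. If the remaining facets are not all in a single level with $F_{i-1}$ (i.e.\ we are starting a fresh level, or the previous level is exhausted in a suitable sense) we apply Proposition~\ref{p:glo} to the tree $\langle F_0,\ldots,F_i\rangle$ for the appropriate choice of the remaining facet set — more precisely, among the not-yet-placed facets $F_i,\ldots,F_q$, Proposition~\ref{p:glo} (applied with $a$ equal to the size of the current initial segment together with one more facet, and $b$ the level width) guarantees that some facet among those tied at the current intersection level with $F_0$ is a leaf of the complex obtained by adjoining it. We select that facet to be $F_i$. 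This immediately gives (2). Condition (4), connectedness of each $\D_i$, is automatic and should be dispatched first and separately: every $F_j$ with $j\geq 1$ meets $F_0$ (since $F_0\cap F_j \subseteq F_0\cap F_1$, and we may assume $F_0\cap F_1\neq\emptyset$ — if it were empty the tree would be disconnected, contradicting that $\D$ is a tree), so $\D_i$ is a star around $F_0$ and hence connected.

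The substantive new content is condition (3), and I would prove it by analyzing, at each step, what happens to $F_{i-1}$ when $F_i$ is adjoined. By induction $F_{i-1}$ is a leaf of $\D_{i-1}$ with some joint $F_s$ ($s<i-1$), meaning $F_{i-1}\cap H \subseteq F_s$ for all $H\in\D_{i-1}\setminus\{F_{i-1}\}$. Adjoining $F_i$, there are two possibilities. Either $F_{i-1}\cap F_i\subseteq F_s$, in which case $F_{i-1}$ remains a leaf of $\D_i$ with the same joint $F_s$ — this is the first alternative in (3). Or $F_{i-1}\cap F_i\not\subseteq F_s$. In that case I claim $F_{i-1}$ must be the joint of $F_i$ in $\D_i$, and it is the unique one. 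To see $F_{i-1}$ is a joint of $F_i$: we know $F_i$ is a leaf of $\D_i$ (condition (2) just established), so it has some joint $F_t$ with $t\leq i-1$; if $t\neq i-1$ then $F_i\cap F_{i-1}\subseteq F_t$, and since $F_t\in \D_{i-1}\setminus\{F_{i-1}\}$ we also have (using that $F_{i-1}$ is a leaf of $\D_{i-1}$ with joint $F_s$) a containment chain forcing $F_{i-1}\cap F_i\subseteq F_s$, contradicting our case assumption. Hence $t=i-1$, i.e.\ $F_{i-1}$ is the joint of $F_i$. For uniqueness of the joint, one argues that any other joint $F_{t'}$ of $F_i$ would again satisfy $F_i\cap F_{i-1}\subseteq F_{t'}\subseteq$ (by the leaf-of-$\D_{i-1}$ property applied to $F_{t'}$) a set contained in $F_s$, the same contradiction; the case analysis is essentially Lemma~\ref{l:three-indices} applied to $\langle F_s, F_{i-1}, F_i\rangle$ together with the intersection-monotonicity with $F_0$, exactly as in the last displayed contradiction inside the proof of Proposition~\ref{p:glo}.

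The main obstacle I anticipate is a bookkeeping one rather than a conceptual one: Proposition~\ref{p:glo} only tells us that \emph{some} facet in the current level is a leaf of the complex formed by adjoining it, but it does not tell us which one, nor does it by itself guarantee that after we place that facet the \emph{remaining} facets in the same level can still be placed one at a time as leaves. So the induction really has to be organized to re-apply Proposition~\ref{p:glo} to the tree $\langle F_0,\ldots,F_{i-1}\rangle$ together with the set of not-yet-placed facets at each stage — that is, one must check that the hypotheses of Proposition~\ref{p:glo} (the chain $F_0\cap F_{a-b-1}\supsetneq F_0\cap F_{a-b}=\cdots=F_0\cap F_a$) are met for the reindexed remaining facets at every step, which they are because removing already-placed facets only shortens an initial segment of a still-valid monotone chain. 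I would make this precise by phrasing the induction as: ``for every subset $S$ of facets containing $F_0$ that forms an initial-segment-closed family under the $F_0$-intersection order, $S$ has a good leaf order extending $F_0$,'' and peel off the last level using Proposition~\ref{p:glo} repeatedly within that level. The other place requiring care is the uniqueness clause of (3): one must make sure the ``unique joint'' claim is about $\D_i$ specifically, and reconcile it with the fact that in later complexes $\D_{i+1},\D_{i+2},\dots$ the joint of $F_i$ may change — but that is fine, since (3) is a statement relating consecutive complexes only.
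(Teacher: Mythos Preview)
Your approach to parts (1)--(3) is essentially the paper's: reorder within each ``level'' by repeatedly applying Proposition~\ref{p:glo}, and then analyze how the joint of $F_{i-1}$ interacts with $F_i$. Your argument for (3) via the dichotomy $F_{i-1}\cap F_i\subseteq F_s$ versus $F_{i-1}\cap F_i\not\subseteq F_s$ is correct and is a mild rephrasing of the paper's free-vertex argument.

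There is, however, a genuine gap in your treatment of (4). You write that every $F_j$ meets $F_0$ ``since $F_0\cap F_j \subseteq F_0\cap F_1$ and $F_0\cap F_1\neq\emptyset$''---but this inference is backwards: a subset of a nonempty set can certainly be empty. In fact the case $F_0\cap F_j=\emptyset$ for large $j$ is not only possible, it is the generic situation and is exactly what Theorem~\ref{t:glo-res} exploits (take, e.g., a path with three edges). So connectedness of $\D_i$ is \emph{not} automatic from a star-around-$F_0$ picture. The paper handles this by observing that in the construction each $\D_i$ is obtained from $\D_{i+1}$ by removing a leaf, and removing a leaf from a connected complex preserves connectedness (any connecting chain passing through the leaf can be rerouted through its joint); since $\D_q=\D$ is connected by hypothesis, downward induction gives (4). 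You will need some argument of this sort once $F_0\cap F_i=\emptyset$; the ``star'' picture only covers the indices $i$ with $F_0\cap F_i\neq\emptyset$.
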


  \begin{proof} The good leaf $F_0$ induces an order on the facets of $\D$ 
   that satisfies the first property.  We need to refine this order to
   achieve the second property. Let $i \in \{1,\ldots,q\}$. Starting
   from the beginning, here is how we proceed. For $i\in
   \{1,\ldots,q\}$ let $c_i$ be the largest nonnegative integer such
   that $F_i \cap F_0=F_{i+c_i}\cap F_0$ where $i+c_i\leq q$. 

   Set $i=1$.

   \begin{enumerate}
    \item[Step 1] If $c_i=0$ then set $i:=i+1$ and go back to Step 1.

    \item[Step 2] If $c_i >0$ then we reorder $F_i,\ldots, F_{i+c_i}$
      as follows. By Proposition~\ref{p:glo} there is a leaf
      $F_{\ell_{c_i}} \in \{F_i,\ldots, F_{i+c_i}\}$ of
      $\Gamma=\langle F_0,\ldots,F_{i+c_i}\rangle$. Applying the same
      proposition again there is a leaf $F_{\ell_{c_i-1}} \in
      \{F_i,\ldots, F_{i+c_i}\} \sm \{F_{\ell_{c_i}}\}$ of $\Gamma
      \rmv{F_{\ell_{c_i}}}$. We continue this way $c_i+1$ times and in
      the end we have a sequence $$F_{\ell_0},F_{\ell_1},\ldots,
      F_{\ell_{c_i}}$$ which is a reordering of the facets
      $F_i,\ldots, F_{i+c_i}$ that satisfies both properties (1) and
      (2) in the statement of the theorem. We relabel $F_i,\ldots,
      F_{i+c_i}$ with this new order and set $i:=i+c_i+1$.
    
    \item[Step 3] If $i>q$ we stop and otherwise we go back to Step 1. 
   \end{enumerate}

   At the end of this algorithm, the facets of $\D$ have the desired
   order.

   To prove the third part of the theorem, note that as $F_{i-1}$ is a
   leaf in $\D_{i-1}$, it has a set of free vertices in $\D_{i-1}$
   which we call $A$. There are two scenarios.

   \begin{itemize}
    \item[-] If $F_i \cap A \neq \emptyset$, then $F_{i-1}$ has to be
      the unique joint of $F_i$ in $\D_i$, as no other facet of $\D_i$
      would contain any element of $A$.

    \item[-] If $F_i \cap A = \emptyset$, then $F_i \cap F_{i-1}
      \subseteq \D_{i-2}\cap F_{i-1} \subseteq F_\alpha$, where
      $F_\alpha$ is the joint of $F_{i-1}$ in $\D_{i-1}$. Therefore,
      $F_{i-1}$ is a leaf of $\D_i$.
   \end{itemize}

   Finally to see that $\D_i$ is connected for every $i$, we consider
   two situations.

    \begin{enumerate}
    \item $F_i\cap F_0 \neq \emptyset$. In this case $\D_i$ is
      connected as all facets of $\D_i$ intersect $F_0$.

    \item $F_i \cap F_0=\emptyset$. If $i=q$ then $\D_i=\D$ which is
      connected. Now we assume that $i$ is the smallest index with
      $F_i \cap F_0=\emptyset$, and $c_i>0$, and we consider how
      $\D_i,\ldots,\D_q=\D$ are built in Step 2.  We start from $\D$,
      and pick a leaf for $\D$ from $F_i, \ldots, F_q$.  We call this
      facet $F_q$ and we know already that $\D_q=\D$ must be
      connected. To pick $\D_{q-1}$ we remove the leaf $F_q$ from
      $\D$, and so $\D_{q-1}$ has to be connected. To build $\D_{q-2}$
      we again remove a leaf from $\D_{q-1}$, which forces $\D_{q-2}$
      to be connected, and so on until we reach $\D_i$, which by the
      same reasoning has to be connected.
   \end{enumerate}
  \end{proof}

%%%%%%%%%%%%%%%%%%%%%%%%%%%%%%%%%%%%%%%%%%%%%%%%%%%%%%%%%%%%%%%%%%%%%%%%%%%%%
%%%%%%%%%%%%%%%%%%%%%%%%%%%%%%%%%%%%%%%%%%%%%%%%%%%%%%%%%%%%%%%%%%%%%%%%%%%%%

\section{The effect of good leaf orders on resolutions}

Recall that for a monomial ideal $I$, the notation $\GG(I)$ denotes
the unique minimal monomial generating set for $I$.

\begin{definition}[Splitting~\cite{EK}]\label{d:splitting} A monomial 
ideal $I$ is called {\bf splittable} if one can write $I=J+K$ for two
nonzero monomial ideals $J$ and $K$, such that
\begin{enumerate}
\item $\GG(I)$ is the disjoint union of $\GG(J)$ and  $\GG(K)$;
\item There is a \emph{splitting function} $\GG(J\cap K) \to \GG(J) \times
  \GG(K)$ taking each $w \in \GG(J\cap K)$ to $(\phi(w), \psi(w))$ satisfying
\begin{enumerate}
\item For each $w \in \GG(J\cap K)$, $w=\lcm (\phi(w), \psi(w))$
\item For each $S\subseteq \GG(J\cap K)$, $\lcm (\phi(S))$ and $\lcm (\psi(S))$ strictly divide $\lcm (S)$.
\end{enumerate}
\end{enumerate}
\end{definition}

If a monomial ideal is splittable, then its Betti numbers can be broken down
into those of sub-ideals.

\begin{theorem}[\cite{EK, Fa}]\label{t:EK} 
  If $I$ is a monomial ideal with a splitting $I=J+K$, then for all
  $i,j \geq 0$
$$\beta_{i,j}(I)=\beta_{i,j}(J) + \beta_{i,j}(K) + \beta_{i-1,j}(J\cap K).$$
\end{theorem}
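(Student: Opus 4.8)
The statement to prove, Theorem~\ref{t:EK}, is the Eliahou--Kervaire splitting formula, which is a classical result; the citation \cite{EK, Fa} indicates it is being quoted rather than reproven from scratch in this paper. Nonetheless, I sketch how I would prove it.

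\medskip

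\textbf{Plan.} The plan is to build a mapping cone from the minimal free resolutions of $J$, $K$, and $J\cap K$, show that it resolves $I=J+K$, and then use the splitting hypothesis to argue that this resolution is in fact minimal, so that the Betti numbers add exactly as claimed. The starting point is the short exact sequence of $R$-modules
$$0 \longrightarrow J\cap K \longrightarrow J\oplus K \longrightarrow J+K \longrightarrow 0,$$
where the left map is $w\mapsto (w,-w)$ and the right map is $(a,b)\mapsto a+b$; this is exact because $I=J+K$ and intersections of ideals behave as pullbacks here. Let $\mathbf{F}_\bullet$, $\mathbf{G}_\bullet$, $\mathbf{H}_\bullet$ be the minimal graded free resolutions of $J$, $K$, $J\cap K$ respectively. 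From the surjection $\mathbf{H}_\bullet \to \mathbf{F}_\bullet\oplus\mathbf{G}_\bullet$ (lifted from $J\cap K\hookrightarrow J\oplus K$) one forms the mapping cone $\mathbf{C}_\bullet$, which is a graded free resolution of $I$ with $C_i = (F_i\oplus G_i)\oplus H_{i-1}$. Hence $\beta_{i,j}(I)\le \beta_{i,j}(J)+\beta_{i,j}(K)+\beta_{i-1,j}(J\cap K)$ automatically; the content of the theorem is that equality holds, i.e.\ the mapping cone is already minimal.

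\medskip

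\textbf{Where the splitting hypothesis enters.} Minimality of the mapping cone is equivalent to the comparison map $\mathbf{H}_\bullet\to\mathbf{F}_\bullet\oplus\mathbf{G}_\bullet$ having all entries in the maximal ideal $\mathfrak{m}=(\xs)$ after choosing bases. In degree $0$ this map is the inclusion $\GG(J\cap K)\hookrightarrow \GG(J)\oplus\GG(K)$ made homogeneous: a generator $w\in\GG(J\cap K)$ maps to $\bigl(\tfrac{w}{\phi(w)}\cdot e_{\phi(w)},\; -\tfrac{w}{\psi(w)}\cdot e_{\psi(w)}\bigr)$ after we use the splitting function to factor $w=\lcm(\phi(w),\psi(w))$ through generators $\phi(w)\in\GG(J)$ and $\psi(w)\in\GG(K)$. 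Condition (2a) of Definition~\ref{d:splitting}, $w=\lcm(\phi(w),\psi(w))$, guarantees these monomial coefficients $w/\phi(w)$ and $w/\psi(w)$ are genuine monomials; condition (2b), that $\lcm(\phi(S))$ and $\lcm(\psi(S))$ \emph{strictly} divide $\lcm(S)$ for every subset $S\subseteq\GG(J\cap K)$, is exactly what forces these coefficients to be nonconstant (in $\mathfrak m$) not only on generators but on all the higher syzygies built from them. The induction step is: assuming the comparison map lands in $\mathfrak m$ through homological degree $i-1$, lift it to degree $i$; since the resolutions $\mathbf F,\mathbf G,\mathbf H$ are minimal their differentials have entries in $\mathfrak m$, and condition (2b) applied to the relevant subsets of generators supporting a degree-$i$ syzygy shows the lift can again be taken with entries in $\mathfrak m$. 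Therefore the mapping cone has differential with all entries in $\mathfrak m$, hence is minimal, and the Betti numbers add as stated.

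\medskip

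\textbf{Main obstacle.} The main obstacle is the inductive minimality argument: one must show that condition (2b) — a statement purely about least common multiples of finite subsets of $\GG(J\cap K)$ — actually controls the entries of the comparison chain map in every homological degree, not just degree $0$. The clean way to do this is to realize all three resolutions inside a framework where the basis elements in homological degree $i$ are indexed by (or at least ``supported on'') subsets of size roughly $i+1$ of the respective generating sets, so that the multidegree of a basis element is the lcm of the supporting generators; then (2b) literally says the coefficient monomial $\lcm(S)/\lcm(\phi(S))$ comparing a syzygy on $S\subseteq\GG(J\cap K)$ to the corresponding syzygy on $\phi(S)\subseteq\GG(J)$ is divisible by some variable. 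Making ``supported on'' precise in a resolution-independent way is the delicate point; one route is to use the Taylor complex (which surjects onto any minimal resolution) to reduce to the case where basis elements genuinely are subsets, prove the strict-divisibility bound there, and then push the conclusion down to the minimal resolutions by naturality. Alternatively, and this is the route taken in the cited references, one invokes the already-established theory of splittable ideals directly. Since the paper only needs the formula as a black box for computing Betti numbers of facet ideals of trees, it is entirely appropriate to cite \cite{EK, Fa} here rather than reproduce this argument.
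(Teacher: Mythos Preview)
Your reading is correct: the paper does not prove Theorem~\ref{t:EK} at all but simply quotes it from \cite{EK,Fa} and uses it as a black box, exactly as you anticipated in your opening sentence. There is therefore no ``paper's own proof'' to compare against, and your decision to sketch the standard mapping-cone argument is appropriate and matches the approach in the cited references.

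One small slip worth fixing: you write ``the surjection $\mathbf{H}_\bullet \to \mathbf{F}_\bullet\oplus\mathbf{G}_\bullet$,'' but this is a chain map lifting the \emph{inclusion} $J\cap K \hookrightarrow J\oplus K$, not a surjection. Otherwise your outline---short exact sequence, mapping cone giving the inequality $\leq$, and the splitting conditions (2a)--(2b) forcing the comparison map to have entries in $\mathfrak{m}$ so that the cone is minimal---is the standard argument and is sound at the level of a sketch. Your identification of the ``main obstacle'' (propagating condition~(2b) from degree~$0$ to all homological degrees, e.g.\ via the Taylor complex) is exactly the technical heart of Fatabbi's proof.
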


Our next observation is that a good leaf order on a simplicial tree
provides a basic splitting of its facet ideal.

\begin{theorem}[Splitting using a good leaf order]\label{t:glo-res} 
If $I$ is the facet ideal of a simplicial tree $\D$ with a good leaf
$F_0$ and good leaf order
$$F_0 \cap F_1 \supseteq F_0 \cap F_2 \supseteq \ldots \supseteq F_0
\cap F_t \supsetneq F_0 \cap F_{t+1}=\ldots=F_0 \cap F_q=\emptyset$$ and
$J=(F_0,\ldots,F_t)$ and $K=(F_{t+1},\ldots,F_q)$, then $I=J+K$ is a
splitting of $I$.
\end{theorem}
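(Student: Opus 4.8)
The plan is to verify the three conditions in Definition~\ref{d:splitting} directly for the decomposition $I = J + K$ with $J=(F_0,\ldots,F_t)$ and $K=(F_{t+1},\ldots,F_q)$. The first condition is essentially automatic: the minimal generators of the facet ideal of a simplicial complex are exactly the monomials corresponding to its facets, and the facets $F_0,\ldots,F_q$ are distinct (as facets of a simplicial complex, none contains another), so no generator of $J$ equals a generator of $K$; hence $\GG(I) = \GG(J) \sqcup \GG(K)$. The substance of the argument lies in defining the splitting function on $\GG(J\cap K)$ and checking condition~(2).

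For the splitting function, I would first analyze what $J\cap K$ looks like. A generator of $J\cap K$ is a monomial $w$ divisible by some $F_i$ ($i\leq t$) and by some $F_j$ ($j\geq t+1$); the minimal such monomials are the $\lcm(F_i,F_j) = F_i\cup F_j$ for $i\leq t < j$ (after removing non-minimal ones). The key structural fact I want to exploit is that $F_0$ is a good leaf with the given good leaf order, so $F_0\cap F_j = \emptyset$ for every $j\geq t+1$. This means that for such a $j$, the variable set of $F_0$ and that of $F_j$ are disjoint, so $\lcm(F_0,F_j) = F_0\cup F_j$ is a genuine ``product'' with no overlap. The natural definition is then: for each $w\in\GG(J\cap K)$, write $w = \lcm(F_i, F_j)$ for a suitable choice of $i\leq t$ and $j\geq t+1$ (if several work, pick one canonically — e.g. the lexicographically first such pair, or better, always take $i=0$ when $F_0\mid w$), and set $\phi(w) = F_i$, $\psi(w) = F_j$. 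Condition~(2a), $w = \lcm(\phi(w),\psi(w))$, then holds by construction.

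The main obstacle is condition~(2b): for every subset $S\subseteq\GG(J\cap K)$, both $\lcm(\phi(S))$ and $\lcm(\psi(S))$ must \emph{strictly} divide $\lcm(S)$. Strictness is the delicate point. Here is where I expect to need the good leaf structure in an essential way. Consider $\lcm(\psi(S))$, which is an $\lcm$ of certain facets $F_{j}$ with $j\geq t+1$; I must produce a variable dividing $\lcm(S)$ but not $\lcm(\psi(S))$. Since every $w\in S$ is divisible by some $F_i$ with $i\leq t$, and in particular $\lcm(S)$ is divisible by the chosen $\phi(w)$'s, it suffices to find a vertex of some $\phi(w)=F_i$ (with $i\leq t$) that does not lie in any $F_j$ with $j\geq t+1$. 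The cleanest way to guarantee this is to arrange the splitting function so that $\phi(w)$ is always $F_0$ whenever possible; since $F_0$ is a \emph{leaf} of $\D$, it has a free vertex $x$, and $x$ appears in no other facet of $\D$, hence in no $F_j$. One then has to handle the generators $w\in\GG(J\cap K)$ for which $F_0\nmid w$ — these are $\lcm(F_i,F_j)$ with $1\leq i\leq t$ — and show that across any subset $S$ the $\phi$-parts still collectively contribute a missing variable; here I would use that $F_0\cap F_i \neq\emptyset$ is forced by the good leaf order only for $i\leq t$, together with the leaf/joint structure from Theorem~\ref{t:glo} (each $F_i$ for $i\leq t$ is a leaf of $\D_i$ and so has a free vertex in $\D_i$), to locate a vertex of some $\phi$-factor absent from all of $K$. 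The symmetric argument for $\lcm(\phi(S))\mid\lcm(S)$ strictly uses instead a free vertex of one of the $\psi$-factors $F_j$: since $j\geq t+1$ and in particular $j$ is large in the good leaf order, $F_j$ is a leaf of $\D_j$ and hence contributes a vertex not in $\D_{j-1}\supseteq J$-part, which gives strictness on that side. Assembling these observations — good leaf order $\Rightarrow$ $F_0$ disjoint from $K$, plus leaf $\Rightarrow$ free vertex — into a single clean choice of splitting function is the technical heart of the proof.
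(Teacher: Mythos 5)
Your approach is essentially the paper's: the splitting function assigns to each $w\in\GG(J\cap K)$ a canonical pair $(F_i,F_j)$ with $i\leq t<j$ and $w=\lcm(F_i,F_j)$, strictness of $\lcm(\phi(S))\mid\lcm(S)$ comes from a vertex of the $K$-factor lying outside every $J$-facet, and strictness of $\lcm(\psi(S))\mid\lcm(S)$ comes from $F_0$. The only real difference is that the step you defer as ``the technical heart'' is immediate from facts you already stated and needs no case split on whether $F_0\mid w$ and no free vertices or joints: for any $w\in S$ with $\phi(w)=F_i$, the good leaf order gives $F_0\cap F_i\supseteq F_0\cap F_t\neq\emptyset$, and any vertex of $F_0\cap F_i$ lies in $\lcm(S)$ but in no $F_j$ with $j\geq t+1$ (since $F_0\cap F_j=\emptyset$ there), hence not in $\lcm(\psi(S))$ — which is exactly the paper's two-line argument.
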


\begin{proof} It is clear that $I=J+K$. We number the vertices of
  $F_0,\ldots,F_t$ in some order as $x_1,\ldots,x_m$. We will build
  $\phi$ and $\psi$ as in Definition~\ref{d:splitting}. Suppose $L \in
  \GG(J \cap K)$. Then there are facets $F_i$ and $F_j$ such that $i
  \leq t < j$ such that $L=\lcm (F_i,F_j)$. Of all choices of such
  $F_i$ we pick one minimal with respect to lex order and call it
  $G_L$, and there is only one choice for $F_j$ (since each $F_j$ adds
  one or more new vertices to the sequence $F_0,\ldots,F_{j-1}$); call
  this facet $H_L$. So we have $L=\lcm(G_L , H_L)$. Let $\phi(L)=G_L$
  and $\psi(L)=H_L$ so that we have a map 
 
$$\begin{array}{cll}
\GG(J \cap K)& \to &\GG(J) \times \GG(K)\\ 
L &\to &(\phi(L),\psi(L))=(G_L,H_L)
 \end{array}$$

  We only need to show that Condition~(b) in
  Definition~\ref{d:splitting} holds. Suppose $S=\{L_1,\ldots,L_r\}
  \subseteq \GG(J\cap K)$. Suppose, as before, for each $i$ we can
  write $L_i=\lcm(G_{L_i},H_{L_i})=G_{L_i} \cup H_{L_i}$ where
  $G_{L_i} \in \GG(K)$ and $H_{L_i} \in \GG(K)$. We need to show
 
 \begin{enumerate}
  \item $G_{L_1}\cup \cdots \cup G_{L_r} \subsetneq L_1 \cup \cdots \cup L_r$.

    This is clear since each of the $L_i$ contains vertices that are in
    $\GG(K)$ but not in $\GG(J)$.
  
  \item $H_{L_1}\cup \cdots \cup H_{L_r} \subsetneq L_1 \cup \cdots \cup L_r$

    Each of the $L_i$ has a nonempty intersection with $F_0$, but
    $H_{L_i}\cap F_0=\emptyset$, which makes the inclusion above
    strict.
  \end{enumerate}

So we have shown that we have a splitting which completes the proof.   
  \end{proof}

  As a result, we can use good leaf orders to bound invariants related
  to resolutions of trees. Recall that the {\bf regularity} of an
  ideal $I$, denoted by $\reg(I)$, is the maximum value of $j-i$ where
  $\beta _{i,j}(I) \neq 0$. The {\bf projective dimension} of $I$,
  denoted by $\pd(I)$, is the maximum value of $i$ where $\beta
  _{i,j}(I) \neq 0$ for some $j$. The projective dimension and
  regularity measure the ``length'' and the ``width'' of a minimal
  free resolution, as can be seen in the Betti diagram of the ideal;
  see Example~\ref{e:examp} below.  For a simplicial complex $\Gamma$
  we often use the notation $\beta _{i,j}(\Gamma)$, $\reg(\Gamma)$ and
  $\pd(\Gamma)$ to indicate the Betti numbers, regularity and
  projective dimension of $\F(\Gamma)$.

   The following statement is a direct application of
   theorems~\ref{t:EK} and~\ref{t:glo-res}.

  \begin{corollary}[Bounds on Betti numbers of trees]\label{c:bounds} 
Suppose $\D$ is a simplicial tree that can be partitioned into
subcollections $\D_0, \ldots,\D_s$, each of which is a tree, and such
that for each $i=0,\ldots,s$, setting $a_0=0$ we have:
\begin{enumerate}
\item $\D_i=\tuple{F_{a_i},F_{a_i+1},\ldots,F_{a_{i+1}-1}}$ with good leaf $F_{a_i}$. 

\item $F_{a_i+1}\cap F_{a_i}\supseteq \ldots \supseteq
  F_{a_{i+1}-1}\cap F_{a_i} \neq \emptyset$ is a good leaf order on
  $\D_i$;

\item $F_{a_i} \cap F_j = \emptyset$ for $j \geq a_{i+1}$.
\end{enumerate}
Then $$\beta _{i,j}(\D) \geq \beta _{i,j}(\D_0) + \cdots + \beta
_{i,j}(\D_s).$$ In particular
$$\pd(\D) \geq \max \{\pd(\D_0), \ldots, \pd(\D_s)\}$$ and
$$\reg(\D) \geq \max \{\reg(\D_0),\ldots,\reg(\D_s)\}.$$
\end{corollary}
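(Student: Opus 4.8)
The plan is to iterate Theorems~\ref{t:EK} and~\ref{t:glo-res} down the chain of subcollections $\D_0,\ldots,\D_s$. First I would dispose of the easy reductions: the two ``In particular'' assertions follow immediately from the displayed inequality on graded Betti numbers, since if $\beta_{i,j}(\D_k)\neq 0$ for some $k$ then $\beta_{i,j}(\D)\geq\beta_{i,j}(\D_k)>0$, giving $\pd(\D)\geq\pd(\D_k)$ and $\reg(\D)\geq\reg(\D_k)$ for each $k$, hence the stated maxima. So the whole content is the inequality $\beta_{i,j}(\D)\geq\sum_{k=0}^s\beta_{i,j}(\D_k)$.

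For that inequality, I would set $J_k=\F(\langle F_0,\ldots,F_{a_{k+1}-1}\rangle)$ — i.e.\ the facet ideal of the union of the first $k+1$ subcollections — and $K_k=\F(\D_k)=(F_{a_k},\ldots,F_{a_{k+1}-1})$, so that $J_{k}=J_{k-1}+K_k$ with $J_s=\F(\D)$. The key claim is that each of these is a splitting in the sense of Definition~\ref{d:splitting}. This is exactly the situation of Theorem~\ref{t:glo-res}: by hypothesis (2), $F_{a_k+1}\cap F_{a_k}\supseteq\cdots\supseteq F_{a_{k+1}-1}\cap F_{a_k}\neq\emptyset$ is a good leaf order on $\D_k$, and by hypothesis (3), $F_{a_k}$ is disjoint from every facet appearing in $J_{k-1}$ as well as in $\D_{k+1},\ldots,\D_s$; so within the ambient tree the good leaf $F_{a_k}$ meets precisely the facets of $\D_k$ other than itself, and the facets of $J_{k-1}$ form the ``empty intersection'' tail. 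Reading the proof of Theorem~\ref{t:glo-res} with the roles $J\leftrightarrow J_{k-1}$ (generators disjoint from $F_{a_k}$, i.e.\ the tail) and $K\leftrightarrow K_k$ (generators meeting $F_{a_k}$), condition (2a) of Definition~\ref{d:splitting} holds because each $L\in\GG(J_{k-1}\cap K_k)$ has a unique factorization forced by the good leaf order, and condition (2b) holds because the generators of $K_k$ all meet $F_{a_k}$ while those of $J_{k-1}$ all avoid it, so each $L_i$ carries a vertex not in $\GG(J_{k-1})$ and a vertex (in $F_{a_k}$) not in $\GG(K_k)$; both unions of lcm's therefore strictly divide the total lcm. (I should double-check that $\GG(J_{k-1})\cap\GG(K_k)=\emptyset$, which again follows because each $F_j$ introduces a new free vertex, so no generator is repeated.)

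Granting the splittings, Theorem~\ref{t:EK} gives, for all $i,j\geq 0$,
$$\beta_{i,j}(J_k)=\beta_{i,j}(J_{k-1})+\beta_{i,j}(K_k)+\beta_{i-1,j}(J_{k-1}\cap K_k)\geq\beta_{i,j}(J_{k-1})+\beta_{i,j}(\D_k),$$
the inequality because the last term is nonnegative. Now I would induct on $k$: $\beta_{i,j}(J_0)=\beta_{i,j}(\D_0)$, and if $\beta_{i,j}(J_{k-1})\geq\sum_{\ell=0}^{k-1}\beta_{i,j}(\D_\ell)$ then the display gives $\beta_{i,j}(J_k)\geq\sum_{\ell=0}^{k}\beta_{i,j}(\D_\ell)$. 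Taking $k=s$ yields $\beta_{i,j}(\D)=\beta_{i,j}(J_s)\geq\sum_{\ell=0}^s\beta_{i,j}(\D_\ell)$, as desired.

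The main obstacle is verifying that each $J_{k-1}=\F(\langle F_0,\ldots,F_{a_k-1}\rangle)$ and $K_k=\F(\D_k)$ really satisfy the splitting axioms in the ambient tree $\D$ — in particular re-deriving the uniqueness of the factorization $L=\lcm(G_L,H_L)$ when $J_{k-1}$ is not literally the first $t+1$ facets meeting a single good leaf but rather the facet ideal of a union of earlier subcollections. This should go through verbatim from the proof of Theorem~\ref{t:glo-res}, using hypothesis~(3) to guarantee $F_{a_k}$ (hence every facet of $K_k$, via the good leaf order, shares a vertex with it, while it stays disjoint from all of $J_{k-1}$) plays the role that $F_0$ played there; but this is the point that deserves care rather than an appeal to routine computation.
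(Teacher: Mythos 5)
Your overall skeleton is the right one and is what the paper intends: telescope Theorem~\ref{t:EK} over a chain of splittings, drop the nonnegative term $\beta_{i-1,j}(J\cap K)$, and read off the $\pd$ and $\reg$ statements from nonvanishing of graded Betti numbers. The gap is in the splittings themselves. The decomposition $J_k=J_{k-1}+K_k$ you propose (head $=\D_0\cup\cdots\cup\D_{k-1}$ versus $\D_k$) is \emph{not} an instance of Theorem~\ref{t:glo-res}, and your justification for it rests on a misreading of hypothesis~(3). That hypothesis says $F_{a_i}\cap F_j=\emptyset$ for $j\geq a_{i+1}$, i.e.\ each good leaf is disjoint from all \emph{later} facets; it does not say $F_{a_k}$ is disjoint from the facets of $\D_0,\ldots,\D_{k-1}$. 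The paper's own Example~\ref{e:examp} refutes your claim: there $F_{a_1}=F_3=vw$ meets $F_1=yzv\in\D_0$. So in your split the generators of $J_{k-1}$ do not all avoid $F_{a_k}$, the pivot-facet argument from the proof of Theorem~\ref{t:glo-res} does not transfer verbatim, and your statement that each $L_i$ carries ``a vertex (in $F_{a_k}$) not in $\GG(K_k)$'' cannot be right as written, since $F_{a_k}$ is itself a generator of $K_k$. In particular, condition (2b) for $\lcm(\phi(S))$ would require each facet of $\D_k$ to contain a vertex outside all the chosen facets of $\D_0,\ldots,\D_{k-1}$, which your hypotheses do not obviously supply.

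The intended argument runs in the opposite direction and avoids all of this. By hypotheses (2) and (3), $\D_0$ consists of exactly the facets of $\D$ meeting the good leaf $F_0$, so Theorem~\ref{t:glo-res} applies \emph{literally} with $t=a_1-1$: $\F(\D)=\F(\D_0)+\F(\langle F_{a_1},\ldots,F_q\rangle)$ is a splitting, and Theorem~\ref{t:EK} gives $\beta_{i,j}(\D)\geq\beta_{i,j}(\D_0)+\beta_{i,j}(\langle F_{a_1},\ldots,F_q\rangle)$. One then recurses on the tail $\langle F_{a_1},\ldots,F_q\rangle$, a subcollection of a forest whose component containing $F_{a_1}$ is a tree satisfying the same three hypotheses with $F_{a_1}$ as good leaf (distinct components live on disjoint vertex sets, so their Betti numbers contribute at least additively). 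Iterating front-to-back in this way yields $\beta_{i,j}(\D)\geq\sum_{k}\beta_{i,j}(\D_k)$ with each splitting being a direct application of Theorem~\ref{t:glo-res}, rather than a re-derivation for a configuration it does not cover.
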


We demonstrate the effect via the example of Figure~\ref{f:glo} which
we will label below.

\begin{example}\label{e:examp} For the ideal $I=(xyz, yzv, yu, vw, wt)$ the facet complex $\D$ is 
\begin{center}
\includegraphics{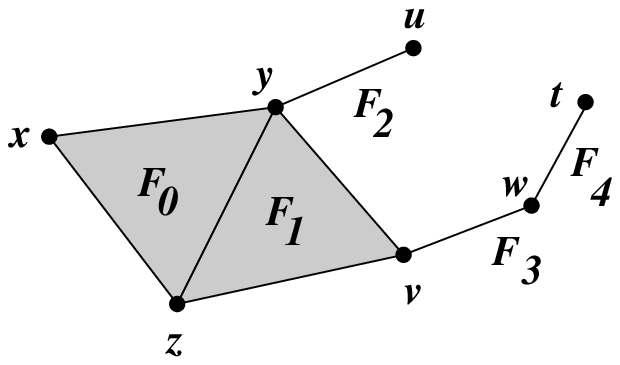}
\end{center}

Following the statement of the corollary, we can find a splitting for $I$ by
partitioning  the facets of $\D$ into two trees with the written good leaf orders
$$ \tuple{F_0, F_1, F_2} \hspace{1cm} \mbox{and} \hspace{1cm}
\tuple{F_3,F_4}$$ which correspond, respectively, to the two ideals
$$ J=(xyz, yzv, yu) \hspace{1cm} \mbox{and}  \hspace{1cm} K=(vw, wt).$$

We copy the Betti diagrams of $I$, $J$ and $K$ (in that
order) using Macaulay2~\cite{M2}.

\bigskip
\begin{center}
\begin{tabular}{|r|llll|}
\hline
          $\mathbf{I}$ \ &0&1&2&3\\
\hline
          1&\emph{3}&\emph{1}&.&.\\
          2&\emph{2}&\emph{6}&\emph{3}&.\\
          3&.&\emph{1}&\emph{2}&\emph{1}\\
\hline
\end{tabular}\hspace{.5in}
\begin{tabular}{|r|lll|}
\hline 
     $\mathbf{J}$\ &0&1&2\\
\hline
     1&\emph{1}&.&.\\
     2&\emph{2}&\emph{3}&\emph{1}\\
\hline
\end{tabular}\hspace{.5in}
\begin{tabular}{|r|ll|}
\hline 
    $\mathbf{K}$\ &0&1\\
\hline
    1&\emph{2}&\emph{1}\\
\hline
\end{tabular}
\end{center}
\bigskip

The bounds presented in Corollary~\ref{c:bounds} are now evident from
the Betti diagrams: $$\reg(I)=3 \geq \max \{ \reg(J),\reg(K) \}=
\max\{2,1\}=2$$ and 
$$\pd(I)=3 \geq \max \{\pd(J),\pd(K)\}=
\max\{2,1\}=2$$
\end{example}

There can be different good leaf orders on a simplicial tree. It would
be interesting to know which one gives a ``better'' splitting, and
better bounds for the resolution invariants.

\subsection{Recursive calculations of Betti numbers}

In~\cite{HV} H\`a and Van Tuyl used Eliahou-Kervaire splittings to
reduce the computation of the Betti numbers of a given simplicial
forest to that of smaller ones. Our goal here is to show that their
formula can be refined in certain cases and be used to
compute the Betti numbers of a given simplicial tree in terms of
intersections of the faces. The method used is essentially a repeated
application of a splitting formula due to H\`a and Van Tuyl~\cite{HV}
to a good leaf order on a given tree, along with an argument that, at
every stage, we know what the next splitting to consider should
be.

\begin{definition}[\cite{HV} Definition~5.1]\label{d:conn} Let $F$ be a facet
 of a simplicial complex $\D$.  The {\bf connected component of $F$ in
 $\D$}, denoted $\conn_\D(F)$, is defined to be the connected component
 of $\D$ containing $F$. If $\conn_\D(F) = \langle G_1,\ldots, G_p
 \rangle$, then we define the {\bf reduced connected component of $F$ in
 $\D$}, denoted by $\oconn_\D(F)$, to be the simplicial complex whose
 facets are a subset of $\{G_1\sm F,\ldots, G_p\sm F\}$, chosen so
 that if there exist $G_i$ and $G_j$ such that $\emptyset \neq G_i\sm
 F \subseteq G_j \sm F$, then we shall disregard the bigger facet $G_j
 \sm F$ in $\oconn_\D(F)$.
\end{definition}

Note that in the Definition~\ref{d:conn}, $\oconn_\D(F)$ is the
localization of $\conn_\D(F)$ at the ideal generated by the complement
of the facet $F$. Therefore if $\D$ is a tree then $\oconn_\D(F)$ is
always a forest~(\cite{F1}). H\`a and Van Tuyl~(\cite{HV}~Lemma~5.7)
prove this directly in their paper.

A facet $F$ of $\D$ is called a {\bf splitting facet of $\D$} if
$\F(\D) = (F) + \F(\D\rmv{F})$ is a splitting of $\F(\D)$ (here we are
thinking of $F$ as a monomial).

\begin{theorem}[\cite{HV} Theorem~5.5]\label{t:HV-main} If $F$ is a 
  splitting facet of a simplicial complex $\D$, then for all $i\geq 1$
  and $j\geq 0$ we
  have 
\begin{align}
\beta_{i,j}(\F(\D))=\beta_{i,j}(\F(\D\rmv{F}))+\sum_{l_1=0}^i\sum_{l_2=0}^{j-|F|}
  \beta_{l_1-1,l_2}(\F(\oconn_\D(F)) \beta_{i-l_1-1,j-|F|-l_2}(\F(\D
  \sm \conn_\D(F)). \label{e:gHV-recursive}
\end{align}
\end{theorem}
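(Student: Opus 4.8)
The plan is to peel the facet $F$ off via the Eliahou--Kervaire formula and then recognize the resulting intersection ideal combinatorially. Since $F$ is a splitting facet, $\F(\D) = (F) + \F(\D\rmv{F})$ is by definition a splitting, so Theorem~\ref{t:EK} applies with $J=(F)$ and $K = \F(\D\rmv{F})$, giving
\[
\beta_{i,j}(\F(\D)) \;=\; \beta_{i,j}\big((F)\big) + \beta_{i,j}\big(\F(\D\rmv{F})\big) + \beta_{i-1,j}\big((F)\cap\F(\D\rmv{F})\big).
\]
As $i\geq 1$ and $(F)$ is principal, the first term drops out, and the whole task reduces to showing that $\beta_{i-1,j}\big((F)\cap\F(\D\rmv{F})\big)$ equals the displayed double sum.

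First I would rewrite the intersection. For monomial ideals $(F)\cap\F(\D\rmv{F}) = F\cdot\big(\F(\D\rmv{F}):F\big)$, and multiplication by the monomial $F$ of degree $|F|$ is a degree-shifting isomorphism of graded free modules; hence $\beta_{i-1,j}\big((F)\cap\F(\D\rmv{F})\big) = \beta_{i-1,\,j-|F|}\big(\F(\D\rmv{F}):F\big)$, which is exactly where the $-|F|$ shifts in the statement come from. Next I would identify the colon ideal. The minimal generators of $\F(\D\rmv{F})$ are the facets $G\neq F$ of $\D$, and colon-ing by $F$ replaces each $G$ by $G\sm F$; a facet lying outside $\conn_\D(F)$ is disjoint from $F$ hence unchanged, whereas the contributions of the facets inside $\conn_\D(F)$, after discarding redundant generators, are by Definition~\ref{d:conn} exactly the minimal generators of $\F(\oconn_\D(F))$. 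Since the vertices of $\oconn_\D(F)$ and of $\D\sm\conn_\D(F)$ are disjoint (a common vertex would merge the two parts into one component), this yields
\[
\F(\D\rmv{F}):F \;=\; \F(\oconn_\D(F)) + \F(\D\sm\conn_\D(F)),
\]
a sum of monomial ideals in two disjoint sets of variables.

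For such a sum, writing the disjoint variable sets as $\xx$ and $\yy$ so that $R=k[\xx,\yy]$ with $J_1 = \F(\oconn_\D(F))\seq k[\xx]$ and $J_2 = \F(\D\sm\conn_\D(F))\seq k[\yy]$, we have $R/(J_1+J_2)\cong (k[\xx]/J_1)\otimes_k(k[\yy]/J_2)$, and the tensor product over $R$ of the two minimal free resolutions is a minimal free resolution of $R/(J_1+J_2)$; hence its bigraded Betti numbers are the convolution of those of $R/J_1$ and $R/J_2$. Translating from Betti numbers of $R/L$ to those of the ideal $L$ via $\beta_{a,b}(L) = \beta_{a+1,b}(R/L)$ — equivalently, reading $\beta_{-1,b}(L)$ as $\beta_{0,b}(R/L)$, i.e.\ $1$ if $b=0$ and $0$ otherwise — turns this convolution into precisely the double sum in the statement; the stated ranges $0\leq l_1\leq i$, $0\leq l_2\leq j-|F|$ suffice because $\beta_{l_1-1,l_2}(\F(\oconn_\D(F)))$ vanishes outside them. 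Substituting back into the Eliahou--Kervaire identity finishes the proof. (The forest structure of $\oconn_\D(F)$ and $\D\sm\conn_\D(F)$ plays no role in this single step; it only matters when the formula is iterated.)

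I expect the heart of the argument — and the only genuinely delicate point — to be the identification of $\F(\D\rmv{F}):F$ in the middle paragraph: one must check both that colon-ing by $F$ minimalizes to exactly $\F(\oconn_\D(F))$ as defined, and that the remaining generators assemble into the variable-disjoint summand $\F(\D\sm\conn_\D(F))$; this is the step where the precise definitions of $\conn_\D$ and $\oconn_\D$ do real work, and where the degenerate cases (when $\D$ is connected, so $\D\sm\conn_\D(F)=\es$ and its facet ideal is $(0)$, or when $F$ is isolated, so $\oconn_\D(F)=\es$) should be checked against the $\beta_{-1,\bullet}$ convention. The homological algebra and the final index manipulation are routine; I would just sanity-check the shifts against the boundary terms $l_1=0$ and $l_1=i$, which must reproduce $\beta_{i-1,\,j-|F|}$ of each of the two summands separately.
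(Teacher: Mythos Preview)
The paper does not prove this theorem at all: it is quoted verbatim from H\`a and Van Tuyl~\cite{HV} as Theorem~5.5 there, and the present paper simply invokes it. So there is no in-paper proof to compare against.

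That said, your argument is correct and is in fact the standard route (and, up to presentation, the one in~\cite{HV}). The three ingredients are exactly the right ones: apply Theorem~\ref{t:EK} to the splitting $(F)+\F(\D\rmv{F})$; rewrite $(F)\cap\F(\D\rmv{F})=F\cdot(\F(\D\rmv{F}):F)$ to pull out the degree shift by $|F|$; and identify the colon ideal as the variable-disjoint sum $\F(\oconn_\D(F))+\F(\D\sm\conn_\D(F))$, whose Betti numbers then factor as a convolution via the tensor product of resolutions. Your verification that the two summands live in disjoint variable sets (because a shared vertex would force the two pieces into the same connected component) is the one point that needs a sentence, and you have it. The translation from $\beta$'s of quotients to $\beta$'s of ideals via the convention~(\ref{e:gb-1}) is exactly how the paper later unpacks the formula, so your bookkeeping is consistent with the surrounding text.

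One small remark on the degenerate cases you flag: when $\D$ is connected, $\F(\D\sm\conn_\D(F))=0$ and only the term $l_1=i$, $l_2=j-|F|$ survives, recovering $\beta_{i-1,j-|F|}(\F(\oconn_\D(F)))$; this is precisely the simplification the paper uses in deriving~(\ref{e:gSF-recursive}), so your sanity check matches the paper's own usage.
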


So now the question is what is a good choice for a splitting facet. In
their paper (\cite{HV}~Theorem~5.6) H\`a and Van Tuyl show that a leaf
of a simplicial complex is a splitting facet. Their proof in fact only
requires the facet to have a free vertex.

\begin{proposition} Let $\D$ be a simplicial
  complex.  If $F$ is a facet of $\D$ with a free vertex, then $F$ is
  a splitting facet of $\D$.
\end{proposition}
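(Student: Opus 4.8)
The plan is to verify Definition~\ref{d:splitting} directly for the decomposition $I = J+K$ where $J = (F)$ (the principal ideal generated by the monomial $F$) and $K = \F(\D \rmv{F})$. First I would dispose of Condition (1): since $F$ has a free vertex, say $x$, the monomial $F$ is divisible by $x$, while no generator of $K$ is divisible by $x$ (every generator of $K$ corresponds to a facet of $\D$ other than $F$, none of which contains $x$). Hence $F \notin \GG(K)$, so $\GG(I) = \{F\} \sqcup \GG(K)$ is a disjoint union, and both $J$ and $K$ are nonzero.

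Next I would construct the splitting function. Since $\GG(J) = \{F\}$ is a singleton, there is essentially no choice: for each $w \in \GG(J \cap K)$, set $\phi(w) = F$, and choose $\psi(w)$ to be any generator $G \in \GG(K)$ with $\lcm(F,G) = w$ — such a $G$ exists because $J \cap K = (F) \cap K$, so every element of $J \cap K$ is a multiple of $\lcm(F,G)$ for some $G \in \GG(K)$, and the minimal ones among these give exactly the generators of $J \cap K$. Condition (2a), $w = \lcm(\phi(w),\psi(w))$, holds by this choice. The remaining work is Condition (2b): for every $S \subseteq \GG(J\cap K)$, both $\lcm(\phi(S))$ and $\lcm(\psi(S))$ must strictly divide $\lcm(S)$. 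For $\phi$ this is immediate, since $\phi(S) = \{F\}$, so $\lcm(\phi(S)) = F$, and $F$ strictly divides $\lcm(S)$ because every $w \in S$, being of the form $\lcm(F,G)$ with $G \neq F$ a facet of $\D$, properly contains $F$ (as $G \setminus F \neq \emptyset$ — otherwise $G \subseteq F$, contradicting that $G$ is a facet distinct from $F$). So $F \waybelow w \mid \lcm(S)$ and the division is strict.

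The one point that takes a little care — and I expect it to be the main obstacle — is showing $\lcm(\psi(S))$ strictly divides $\lcm(S)$. Here I would use the free vertex $x$ of $F$: every $w \in S$ is divisible by $x$ (since $w$ is a multiple of $F$, hence of $x$), so $x \mid \lcm(S)$; but each $\psi(w) = G$ is a facet of $\D \rmv{F}$, which does not contain the free vertex $x$, so $x \nmid \lcm(\psi(S))$. Therefore $\lcm(\psi(S)) \mid \lcm(S)$ with the quotient divisible by $x$, making the division strict. This is the same mechanism used in the proof of Theorem~\ref{t:glo-res}, where the role of $x$ was played by a vertex of the good leaf $F_0$ absent from the $H_{L_i}$. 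Assembling these pieces gives a valid splitting function and hence shows $I = J+K$ is a splitting, so $F$ is a splitting facet of $\D$.
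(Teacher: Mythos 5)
Your proof is correct and is essentially the argument the paper relies on: the paper simply cites the proof of Theorem~5.6 in~\cite{HV}, which is exactly this direct verification of Definition~\ref{d:splitting} for $J=(F)$, $K=\F(\D\rmv{F})$, using the free vertex of $F$ to make both strict-divisibility conditions in (2b) hold. Your writeup just supplies the details the paper outsources.
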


  \begin{proof} The proof is identical to the proof of 
    Theorem~5.6 in~\cite{HV}.
  \end{proof}

We use the convention that for any ideal $I$
\begin{align} \beta_{-1,j}(I)= \left \{
\begin{array}{ll}
1 & j=0\\
0 & \mbox{otherwise}.
\end{array} \right. \label{e:gb-1}
\end{align}

Suppose we have a simplicial tree $\D$ with good leaf order
described as in Theorem~\ref{t:glo}. We apply (\ref{e:gHV-recursive})
to $\D=\langle F_0,\ldots,F_q\rangle$ peeling
off leaves in the following order: $F_q,F_{q-1},\ldots,F_0$.

Suppose we are in step $u$, peeling off the leaf $F_u$ from the tree
$\D_u=\langle F_0,\ldots,F_u\rangle$. Then $\conn_{\D_u}(F_u)=\D_u$
and so $\F(\D_u\sm \conn_{\D_u}(F_u))=0$ and therefore 
$$\beta_{a,b}(\F(\D_u\sm \conn_{\D_u}(F_u))= \left \{
\begin{array}{ll}
1 & a=-1,\ b=0\\
0 & \mbox{otherwise}.
\end{array} \right.$$ 
Applying this to (\ref{e:gHV-recursive}), we solve $i-l_1-1=-1$ and
$j-|F_u|-l_2=0$ to find $l_1=i$ and $l_2=j-|F_u|$. 

Moreover, we have $\oconn_{\D_u}(F_u)=(\D_{u-1})_{\overline{F_u}}$,
that is $\D_{u-1}$ localized at the ideal generated by the complement
of the facet $F_u$ using notation as in Lemma~\ref{l:localization}.
So (\ref{e:gHV-recursive}) turns into

\begin{align*}\beta_{i,j}(\F(\D))&=
\beta_{i,j}(\F(\D_{q-1}))+ \beta_{i-1,j-|F_q|}(\F((\D_{q-1})_{\overline{F_q}}))\notag\\
&= \beta_{i,j}(\F(\D_{q-2}))+ \beta_{i-1,j-|F_{q-1}|}(\F((\D_{q-2})_{\overline{F_{q-1}}}))+ \beta_{i-1,j-|F_q|}(\F({(\D_{q-1})}_{\overline{F_q}}))\notag\\
&\vdots\notag\\
&=\beta_{i,j}(\F(\langle F_0 \rangle))+\sum_{u=1}^{q}\beta_{i-1,j-|F_{u}|}(\F((\D_{u-1})_{\overline{F_{u}}}))
\end{align*}

Note that we did not use the fact that $F_0$ is a good leaf here, just
that each $F_u$ is a leaf if $\D_u$.  We have therefore justified  the
following statement.

\begin{proposition} Let $\D$ be a simplicial tree with a  good 
leaf order $F_0, F_1,\ldots, F_q$ such that each $F_u$ is a leaf of
  $\D_u=\tuple{F_0,\ldots,F_u}$ for $u \leq q$. Then for all $i\geq 1$
  and $j\geq 0$
 \begin{align}
\beta_{i,j}(\F(\D))=\beta_{i,j}(\F(\langle F_0
  \rangle))+\sum_{u=1}^{q}\beta_{i-1,j-|F_{u}|}(\F((\D_{u-1})_{\overline{F_{u}}})).
\label{e:gSF-recursive}
\end{align}
\end{proposition}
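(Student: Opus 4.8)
The statement is essentially a repackaging of the computation carried out in the paragraphs immediately preceding it, so the plan is to turn that discussion into a clean inductive argument. The plan is to induct on $q$, the number of facets of $\D$ beyond $F_0$. The base case $q=0$ is the trivial observation that the right-hand sum is empty and both sides reduce to $\beta_{i,j}(\F(\langle F_0\rangle))$. For the inductive step, I would first verify that $F_q$ is a splitting facet of $\D=\D_q$: by hypothesis $F_q$ is a leaf of $\D_q$, hence it has a free vertex, and then the Proposition quoted from~\cite{HV} (that a facet with a free vertex is a splitting facet) applies. This licenses the use of Theorem~\ref{t:HV-main}, formula~(\ref{e:gHV-recursive}), with $F=F_q$ and $\D=\D_q$.

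The key simplification is that $\conn_{\D_q}(F_q)=\D_q$, since $\D_q$ is connected (this is exactly property~(4) in Theorem~\ref{t:glo}, or one can note directly that a good leaf order produces connected $\D_u$'s). Consequently $\D_q\sm\conn_{\D_q}(F_q)$ is empty, so by convention~(\ref{e:gb-1}) the factor $\beta_{i-l_1-1,\,j-|F_q|-l_2}(\F(\D_q\sm\conn_{\D_q}(F_q)))$ vanishes unless $i-l_1-1=-1$ and $j-|F_q|-l_2=0$, i.e. unless $l_1=i$ and $l_2=j-|F_q|$. The double sum in~(\ref{e:gHV-recursive}) therefore collapses to the single term $\beta_{i-1,\,j-|F_q|}(\F(\oconn_{\D_q}(F_q)))$. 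Next I would identify $\oconn_{\D_q}(F_q)$ with $(\D_{q-1})_{\overline{F_q}}$: by the remark after Definition~\ref{d:conn}, $\oconn_{\D_q}(F_q)$ is the localization of $\conn_{\D_q}(F_q)=\D_q$ at the ideal generated by the complement of $F_q$, and localizing removes the facet $F_q$ itself (its complement in $F_q$ is empty), leaving the localization of $\D_{q-1}$ at the complement of $F_q$, which is the notation $(\D_{q-1})_{\overline{F_q}}$. Thus~(\ref{e:gHV-recursive}) becomes
$$\beta_{i,j}(\F(\D))=\beta_{i,j}(\F(\D_{q-1}))+\beta_{i-1,\,j-|F_q|}(\F((\D_{q-1})_{\overline{F_q}})).$$

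Finally I would apply the induction hypothesis to $\D_{q-1}$, which inherits the good leaf order $F_0,\ldots,F_{q-1}$ with each $F_u$ a leaf of $\D_u$ for $u\le q-1$, to rewrite $\beta_{i,j}(\F(\D_{q-1}))$ as $\beta_{i,j}(\F(\langle F_0\rangle))+\sum_{u=1}^{q-1}\beta_{i-1,\,j-|F_u|}(\F((\D_{u-1})_{\overline{F_u}}))$, and then absorb the extra term $\beta_{i-1,\,j-|F_q|}(\F((\D_{q-1})_{\overline{F_q}}))$ as the $u=q$ summand, yielding~(\ref{e:gSF-recursive}). The only genuinely delicate point — and the step I would spend the most care on — is the identification $\oconn_{\D_q}(F_q)=(\D_{q-1})_{\overline{F_q}}$, since one must be sure that passing to the reduced connected component (discarding facets $G_j\sm F_q$ that contain another $G_i\sm F_q$) produces exactly the facet complex of the localization $\F(\D_{q-1})$ at the prime generated by the variables outside $F_q$; this is precisely the content of the remark following Definition~\ref{d:conn} together with Lemma~\ref{l:localization}, so it can be cited rather than reproved. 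Everything else is bookkeeping with the conventions~(\ref{e:gb-1}) and the vanishing of $\beta$ for the empty complex.
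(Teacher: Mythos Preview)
Your proposal is correct and follows essentially the same approach as the paper: the paper's proof is exactly the iterated ``leaf-peeling'' computation you describe, applying Theorem~\ref{t:HV-main} at each $\D_u$ with splitting facet $F_u$, using connectedness of $\D_u$ to collapse the double sum, and identifying $\oconn_{\D_u}(F_u)=(\D_{u-1})_{\overline{F_u}}$. The only cosmetic difference is that the paper presents this as an unrolled chain of equalities $\beta_{i,j}(\F(\D))=\beta_{i,j}(\F(\D_{q-1}))+\cdots=\beta_{i,j}(\F(\D_{q-2}))+\cdots=\ldots$ rather than as a formal induction on~$q$, but the content is identical.
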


 By introducing an appropriate
``$\delta$'' function we can say
\begin{align}
\beta_{i,j}(\F(\langle F_0 \rangle))=\delta_{(i,j),(0,|F_0|)}=
\left\{ \begin{array}{ll}
1 & i=0,\ j=|F_0|\\
0 & \mbox{otherwise}.
\end{array} \right. \label{e:gone-gen}
\end{align}

So now we focus on the structure of $(\D_{u-1})_{\overline{F_u}}$. The
main point that we would like to make is that
$(\D_{u-1})_{\overline{F_u}}$ behaves well, in other words, it satisfies
the same kind of inclusion sequence enforced in Theorem~\ref{t:glo},
and the same ``leaf-peeling'' property. Note that though $F_0$ need
not even survive the localization, its role is that of a virtual glue
that forces facets to always stick together and have an appropriate
order.

\begin{proposition}\label{p:gDu} Let $\D$ be a simplicial 
tree with a good leaf $F_0$ and good leaf order $$F_0\cap F_1
\supsetneq F_0 \cap F_2 \supsetneq \cdots \supsetneq F_0\cap F_q.$$
Suppose $u\in \{1,\ldots,q\}$ and $\D_u=\tuple{F_1,\ldots,F_u}$, and
suppose $(\D_{u-1})_{\overline{F_{u}}}$ has facets $F_{a_1}\sm
F_u$, \ldots, $F_{a_s}\sm F_u$ with $0\leq a_1 <\ldots <a_s\leq u-1$. Then

\begin{enumerate}
\item $a_s=u-1$,
\item $(F_0\cap F_{a_1})\sm F_u \supsetneq \ldots \supsetneq (F_0\cap
  F_{a_s})\sm F_u$,
\item $(\D_{u-1})_{\overline{F_u}}$ is a simplicial tree,
\item If $F_v$ is a joint of $F_{u}$ in $\D_{u}$ then $F_v \sm F_u
  \in (\D_{u-1})_{\overline{F_u}}$,
\item $F_{u-1}\sm F_u$ has a free vertex in $(\D_{u-1})_{\overline{F_u}}$.
\end{enumerate}
\end{proposition}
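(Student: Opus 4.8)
The plan is to prove the five assertions largely in the order stated, since each later one leans on the earlier ones, and to use throughout the structural facts about good leaf orders packaged in Theorem~\ref{t:glo} together with the characterization of leaves via free vertices. Let me set up notation: write $\D_{u-1}=\tuple{F_0,\ldots,F_{u-1}}$ and let $W$ be the complement of the vertex set of $F_u$, so that $(\D_{u-1})_{\overline{F_u}}$ is the facet complex of $\F(\D_{u-1})$ localized at the prime generated by $W$; concretely, its facets are the maximal elements of $\{F_i\sm F_u : 0\le i\le u-1,\ F_i\sm F_u\neq\emptyset\}$ (a nonempty $F_i\sm F_u$ is discarded only when it is contained in some $F_j\sm F_u$). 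Observe first that $F_u$ cannot contain any $F_i$ with $i<u$ outright (else the generators would not be minimal), so every $F_i\sm F_u$ is nonempty for $0\le i\le u-1$; thus the facets of $(\D_{u-1})_{\overline{F_u}}$ are exactly the maximal members of $\{F_0\sm F_u,\ldots,F_{u-1}\sm F_u\}$.

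For part~(1): by Theorem~\ref{t:glo}(3), $F_{u-1}$ is a leaf of $\D_{u-1}$ with some free vertex set $A$ in $\D_{u-1}$; and by Theorem~\ref{t:glo}(2)--(3) applied at stage $u$, either $F_{u-1}$ remains a leaf of $\D_u$ or $F_{u-1}$ is the (unique) joint of $F_u$ in $\D_u$. In either case I want to show $F_{u-1}\sm F_u$ is a facet of the localization, i.e.\ is \emph{not} contained in any $F_j\sm F_u$ for $j<u-1$. Since $A\subseteq F_{u-1}$ consists of vertices lying in no other $F_j$ ($j\le u-1$), if $A\setminus F_u\neq\emptyset$ we are immediately done, as that surviving free vertex lies in $F_{u-1}\sm F_u$ but in no $F_j\sm F_u$. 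The remaining case $A\subseteq F_u$ is precisely the situation analyzed in the proof of Theorem~\ref{t:glo}(3): there $F_i\cap F_{u-1}\subseteq F_\alpha$ where $F_\alpha$ is the joint of $F_{u-1}$ in $\D_{u-1}$; I will use this to rule out $F_{u-1}\sm F_u\subseteq F_j\sm F_u$, since such a containment would force $F_{u-1}$'s free vertices into $F_j$. This also proves part~(5) in one stroke: $F_{u-1}\sm F_u$ has a free vertex in the localization — if $A\not\subseteq F_u$ use a vertex of $A\setminus F_u$; if $A\subseteq F_u$, then $F_{u-1}$ is the joint of $F_u$, and since by part (4) $F_v\sm F_u$ survives for \emph{this} particular joint, one argues that $F_{u-1}\sm F_u = F_v\sm F_u$ is maximal and in fact any vertex in $(F_{u-1}\cap F_0)\setminus F_u$ works — this needs a small separate check that $(F_{u-1}\cap F_0)\setminus F_u\neq\emptyset$, which follows from part~(2).

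For part~(2), I will show more carefully which indices $a_1<\cdots<a_s$ survive and that $(F_0\cap F_{a_k})\sm F_u$ is strictly decreasing. The chain $F_0\cap F_0\supseteq F_0\cap F_1\supsetneq F_0\cap F_2\supsetneq\cdots$ (strict by hypothesis of the proposition) restricts: $(F_0\cap F_{a_1})\sm F_u\supseteq(F_0\cap F_{a_2})\sm F_u\supseteq\cdots$. To get strict inclusions I need that for surviving indices the strictness is preserved after deleting $F_u$; the key point is that if $(F_0\cap F_i)\sm F_u=(F_0\cap F_j)\sm F_u$ for $i<j\le u-1$ then the "extra" vertices of $F_0\cap F_i$ over $F_0\cap F_j$ all lie in $F_u$, and I will argue this forces $F_i\sm F_u\subseteq$ something already recorded, so $F_i$ is not among the $a_k$; hence consecutive surviving terms are strictly nested. (Here I lean on the facet-ideal/facet-complex dictionary: $F_i\sm F_u\subseteq F_j\sm F_u$ is the divisibility condition for discarding a facet in Definition~\ref{d:conn}.) Part~(4) then follows: if $F_v$ is the joint of $F_u$ in $\D_u$, then $F_u\cap F_h\subseteq F_v$ for all $h<u$, so in particular $F_v$ records the largest intersections with $F_0$ among facets meeting $F_u$, and one checks $F_v\sm F_u$ cannot be absorbed into any $F_j\sm F_u$ — I expect the cleanest route is: if $F_v\sm F_u\subseteq F_j\sm F_u$ then $F_v\subseteq F_j\cup F_u$, and combined with $F_u\cap F_j\subseteq F_v$ one teases out $F_v\subseteq F_j$ or a contradiction with minimality.

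Finally part~(3): $(\D_{u-1})_{\overline{F_u}}$ is a localization of $\F(\D_{u-1})$, and $\D_{u-1}$ is a subcollection of the tree $\D$, hence itself a forest; by Lemma~\ref{l:localization} (localization of a tree/forest is a forest) the localization is a forest, and connectedness follows from Theorem~\ref{t:glo}(4) together with the fact that localizing at the complement of $F_u$ cannot disconnect a complex all of whose facets met $F_0$ — more precisely, I will note that $F_0\cap F_i\neq\emptyset$ for all $i\le u-1$ means either all the $F_i\sm F_u$ still share the vertex set of $F_0\setminus F_u$ (if $F_0\setminus F_u\neq\emptyset$), giving connectedness directly, or $F_0\subseteq F_u$, in which case $F_0\sm F_u$ disappears but then a short argument using part~(2) shows the remaining facets are still linked in a chain through $F_1\sm F_u,\ldots$. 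I expect the main obstacle to be part~(2)/(4): the bookkeeping of exactly which $F_i\sm F_u$ survive the localization and why strictness of the intersection chain with $F_0$ is inherited — the subtlety is that $F_0$ itself may be swallowed by $F_u$, so "intersection with $F_0$" must be replaced throughout by the surrogate "intersection with $F_0$, read inside the localization," and one has to verify this surrogate still controls the ordering. The proof of Theorem~\ref{t:glo}(3) is the template for the case $A\subseteq F_u$ and should be cited rather than redone.
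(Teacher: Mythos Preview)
Your setup contains a fundamental misreading of the localization. You write that the facets of $(\D_{u-1})_{\overline{F_u}}$ are the \emph{maximal} elements of $\{F_i\sm F_u\}$, discarding $F_i\sm F_u$ when it is contained in some $F_j\sm F_u$. But Definition~\ref{d:conn} (and the monomial-ideal viewpoint: after localizing, smaller monomials divide larger ones) does the opposite: when $F_i\sm F_u \subseteq F_j\sm F_u$ one discards the \emph{larger} set $F_j\sm F_u$, so the surviving facets are the \emph{minimal} nonempty $F_i\sm F_u$. Thus for (1) you must show that no $F_i\sm F_u$ with $i<u-1$ is \emph{contained in} $F_{u-1}\sm F_u$; your arguments for (1), (4), and (5) are all checking the reverse inclusion. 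Once the direction is corrected, the paper's proofs of (1), (2), and (4) become very short and rest on a single observation you did not isolate: since the order is \emph{strict}, for $i<j\le u-1$ there is $y\in (F_0\cap F_i)\sm (F_0\cap F_j)$, and because $F_0\cap F_j\supsetneq F_0\cap F_u$ this $y$ automatically avoids $F_u$ as well. This yields (2) for \emph{all} pairs $i<j\le u-1$ (no bookkeeping of surviving indices is needed), and taking $j=u-1$ gives (1). For (4), if $F_j\sm F_u\subseteq F_v\sm F_u$ then, using that $F_v$ is a joint so $F_j\cap F_u\subseteq F_v$, one gets $F_j=(F_j\cap F_u)\cup(F_j\sm F_u)\subseteq F_v$, a contradiction.

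For (5), even after fixing the convention, your argument in the case $A\subseteq F_u$ does not work: you claim ``any vertex in $(F_{u-1}\cap F_0)\sm F_u$ works'' as a free vertex, but such a vertex also lies in $F_0\sm F_u$ and hence typically in another facet of the localization. Concretely, with $F_0=\{1,2,3,4,5\}$, $F_1=\{2,3,4,5,6\}$, $F_2=\{3,4,5,7\}$, $F_3=\{4,5,6,8\}$, $F_4=\{5,8,9\}$ and $u=4$, one has $A=\{8\}\subseteq F_4$, the localization $(\D_3)_{\overline{F_4}}$ has facets $\{1,2,3,4\},\{3,4,7\},\{4,6\}$, and your candidate $(F_3\cap F_0)\sm F_4=\{4\}$ is \emph{not} free (the actual free vertex of $F_3\sm F_4=\{4,6\}$ is $6$). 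The paper proves (5) by an entirely different route: induction on $u$, where for $u\ge 4$ one removes $F_{u-2}$ (respectively $F_{u-3}$) from $\D$, applies the induction hypothesis to obtain vertices $x\in F_{u-1}\cap F_{u-2}$ and $y\in F_{u-1}\cap F_{u-3}$ lying in no other $F_i$ with $i\le u$, and then invokes Lemma~\ref{l:three-indices} on $\tuple{F_{u-3},F_{u-2},F_{u-1}}$ to force $F_0\cap F_{u-1}=F_0\cap F_{u-2}$, contradicting strictness. This inductive use of Lemma~\ref{l:three-indices} is the key idea your proposal is missing.
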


   \begin{proof} To prove 1, suppose 
    there is an $i<u-1$ such that $(F_i\sm F_u) \subset
    (F_{u-1}\sm F_u)$. By assumption there
    exists $y\in (F_0\cap F_i) \sm (F_0\cap F_{u-1})$. As $(F_0\cap
    F_{u-1}) \supset (F_0\cap F_{u})$, it follows that $y \in (F_i\sm
    F_u)$ and $y \notin (F_{u-1}\sm F_u)$, which contradicts the
    inclusion $(F_i\sm F_u) \subset (F_{u-1}\sm F_q)$.  
 
    The strict inclusions in 2 follow from the same observation, that
    for every $i$ there is always an element in $F_0 \cap F_{a_i}$
      which is not in $F_{a_{i+1}}$ or $F_u$.

    Since $(\D_{u-1})_{\overline{F_u}}$ is a localization of the tree
    $\D_{u-1}$, it is clear that it is a forest, and by 2, since $(F_0\cap
    F_{a_s})\sm F_u\neq \emptyset$, it must be connected and therefore
    a simplicial tree. This settles 3.
 
    For 4, suppose for some $j <u$ we have $F_j\sm F_u \subseteq
    F_v\sm F_u$. Then we will have $$F_j=(F_j\cap F_u) \cup (F_j \sm
    F_u) \subseteq F_v$$ which implies that $F_j=F_v$. 

    Finally to prove 5 we use induction on $u$. If $u=1$ or $2$, then
    $(\D_{u-1})_{\overline{F_u}}$ will have one or two facets, and in
    each case $F_{u-1}\sm F_u$ clearly must have a free vertex. If
    $u=3$ then $F_2$ is a leaf of $\D_2$ with a joint $F_i$ for some
    $i<2$. If $F_i\sm F_3 \in (\D_2)_{\overline{F_3}}$, then it acts
    as a joint of $F_2\sm F_3$ so $F_2\sm F_3$ is a leaf and must
    therefore have a free vertex. If $F_i\sm F_3 \notin
    (\D_2)_{\overline{F_3}}$, then $(\D_2)_{\overline{F_3}}$ has at
    most two facets including $F_2\sm F_3$, each of which must have a
    free vertex. This settles the base cases for induction.

   Now suppose $u\geq 4$ and $F_{u-1}\sm F_u$ has no free vertex in
   $(\D_{u-1})_{\overline{F_u}}$.

   By induction hypothesis, if we consider $\Gamma=\D\sm
   \tuple{F_{u-2}}$, then $F_{u-1} \sm F_u$ will have a free vertex
   $x$ in $(\Gamma_{u-1})_{\overline{F_u}}$. If $x$ is not a free
   vertex in $(\D_{u-1})_{\overline{F_u}}$, then for some $j<u-1$ we have
   $x \in F_{j} \sm F_u \in (\D_{u-1})_{\overline{F_u}}$ and $F_{j}
   \sm F_u \notin (\Gamma_{u-1})_{\overline{F_u}}$. The only possible
   such index $j$ is $j=u-2$. In other words, $x \in F_{u-1} \cap
   F_{u-2}$ and $x \notin F_ i$ for any other $i\leq u$.
   
   Similarly, if we remove $F_{u-3}$ from $\D$ we will find a vertex
   $y \in F_{u-1} \cap F_{u-3}$ and $y \notin F_ i$ for any other
   $i\leq u$.
 
   By Lemma~\ref{l:three-indices}, we must then have
   $F_{u-3}\cap F_{u-2} \subseteq F_{u-1}$. Intersecting both sides
   with $F_0$ we obtain $$F_{u-1} \cap F_0 \subseteq F_{u-2} \cap
   F_0= F_{u-3}\cap F_{u-2} \cap F_0 \subseteq F_{u-1} \cap F_0$$ which
   means that $F_{u-1} \cap F_0 = F_{u-2} \cap F_0$; a
   contradiction.
      \end{proof}

Proposition~\ref{p:gDu} now allows us to continue solving
(\ref{e:gSF-recursive}) by applying Theorem~\ref{t:HV-main} once
again, since we have a splitting facet for each $(\D_{u-1})_{\overline{F_u}}$.
Consider the tree $\D$ as described above with the good leaf order
described in Theorem~\ref{t:glo} and for some $u \in \{1,\ldots,q\}$,
let $(\D_{u-1})_{\overline{F_u}}=\langle F_{a_1}\sm F_u,\ldots,F_{a_s}\sm F_u \rangle
$ where $0\leq a_1< \ldots <a_s <u$.  By Proposition~\ref{p:gDu}
$(\D_{u-1})_{\overline{F_u}}$ is a simplicial tree with an order of the facets
induced by the good leaf order of $\D$, and with splitting facet
$F_{a_s}\sm F_u$.

We continue in the same spirit. Let $u_1=u, u_2=a_s$ and
$$\C_{u_1,u_2}=((\D_{u_1-1})_{F_{u_1}})_{F_{u_2}\sm F_{u_1}} =\langle
F_{d_1}\sm (F_{u_1}\cup F_{u_2}),\ldots,F_{d_w} \sm (F_{u_1}\cup
F_{u_2})\rangle$$ where $0 \leq d_1<\ldots<d_w<u_2<u_1$.

Similarly, we can build $\C_{u_1,\ldots,u_m}$ which is the
localization of
\begin{align}\label{e:star}
C_{u_1,\ldots,u_{m-1}}=\langle F_{c_1}\sm (F_{u_1} \cup \ldots \cup
F_{u_{m-1}}),\ldots,F_{c_r}\sm (F_{u_1} \cup \ldots \cup F_{u_{m-1}})
\rangle
\end{align}
at the ideal generated by the complement of the facet
$F_{u_m} \setminus (F_{u_1} \cup \ldots \cup F_{u_{m-1}})$ where
$u_m=c_r$.  So we have
\begin{align}\label{e:double-star}
\C_{u_1,\ldots,u_{m}}=\langle F_{b_1}\sm (F_{u_1} \cup \ldots
\cup F_{u_{m}}),\ldots, F_{b_t} \sm (F_{u_1} \cup \ldots \cup
F_{u_{m}}) \rangle
\end{align}
where $b_1,\ldots,b_t\in \{c_1,\ldots,c_{r-1}\}$,
and $$0 \leq b_1<b_2<\ldots<b_t <c_r=u_m<u_{m-1}<\ldots<u_1.$$

\begin{proposition}\label{p:gDug} Let $\D$ be a simplicial 
tree with a good leaf $F_0$ and good leaf order $$F_0\cap F_1
\supsetneq F_0 \cap F_2 \supsetneq \cdots \supsetneq F_0\cap F_q.$$ With
notation as in~(\ref{e:star}) and~(\ref{e:double-star}) above, we
have
\begin{enumerate}

\item $b_t=c_{r-1}$,

\item $(F_0\cap F_{b_1})\sm (F_{u_1} \cup \ldots
  \cup F_{u_{m}})\supsetneq \ldots \supsetneq (F_0\cap F_{b_t})\sm
  (F_{u_1} \cup \ldots \cup F_{u_{m}})$,

\item $\C_{u_1,\ldots,u_{m}}$ is a simplicial tree,

\item $F_{b_t}\sm (F_{u_1} \cup \ldots \cup
F_{u_{m}})$ has a free vertex in $\C_{u_1,\ldots,u_{m}}$
  and is therefore a splitting facet of $\C_{u_1,\ldots,u_{m}}$.

\end{enumerate}
\end{proposition}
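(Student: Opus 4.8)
The plan is to prove the four parts simultaneously by induction on $m$. At the bottom of the induction the structure of the complex being localized one more time is supplied by Proposition~\ref{p:gDu} (applied to the tree $\D_{u_1-1}=\tuple{F_0,\ldots,F_{u_1-1}}$, which is a tree by Theorem~\ref{t:glo}(4), and the facet $F_{u_1}$); higher up it is supplied by the inductive hypothesis. Throughout abbreviate $G=F_{u_1}\cup\cdots\cup F_{u_{m-1}}$ and $G'=G\cup F_{u_m}$, and recall from~(\ref{e:star}) and~(\ref{e:double-star}) that $\C_{u_1,\ldots,u_m}$ is the localization of $\C_{u_1,\ldots,u_{m-1}}=\tuple{F_{c_1}\sm G,\ldots,F_{c_r}\sm G}$ at the complement of its \emph{last} facet $F_{c_r}\sm G=F_{u_m}\sm G$, and that by the inductive hypothesis $\C_{u_1,\ldots,u_{m-1}}$ is a simplicial tree whose facets satisfy the strict chain $(F_0\cap F_{c_1})\sm G\supsetneq\cdots\supsetneq(F_0\cap F_{c_r})\sm G$ and whose last facet has a free vertex. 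So we are exactly in the situation of Proposition~\ref{p:gDu}, with the caveat stressed in the discussion preceding that proposition: the facet $F_0$ acts only as \emph{virtual glue}; it may itself fail to survive the localizations, yet the sets $F_0\cap F_i$ (for facets $F_i$ of the ambient tree $\D$) still govern the combinatorics. The basic mechanism, generalizing the key observation in the proof of Proposition~\ref{p:gDu}, is: \emph{if $y\in F_0$ and $y\notin F_k$ for some index $k<u_m$, then $y\notin G'$}; indeed, if $y\in F_{u_j}$ for some $j$, then $u_j\geq u_m>k$, so the strict good leaf order of $\D$ gives $F_0\cap F_{u_j}\subseteq F_0\cap F_k$, forcing $y\in F_k$, a contradiction.

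Granting this, parts (1) and (2) go through essentially as in Proposition~\ref{p:gDu}(1),(2). For (2): given $b_i<b_{i+1}$, the strict good leaf order of $\D$ yields $z\in(F_0\cap F_{b_i})\sm F_{b_{i+1}}$, and since $b_{i+1}\leq c_{r-1}<u_m$ the mechanism gives $z\notin G'$, so $z$ witnesses the strict inclusion. For (1): if $F_{c_i}\sm G'\subseteq F_{c_{r-1}}\sm G'$ for some $c_i<c_{r-1}$, then picking $y\in(F_0\cap F_{c_i})\sm F_{c_{r-1}}$ (possible by strictness) the mechanism gives $y\notin G'$, so $y\in(F_{c_i}\sm G')\sm(F_{c_{r-1}}\sm G')$, a contradiction; and since $F_{c_r}\sm G'=\es$ drops out of the localization, $F_{c_{r-1}}\sm G'$ survives and carries the largest index, so $b_t=c_{r-1}$. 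Moreover $(F_0\cap F_{c_{r-1}})\sm G'\neq\es$: by strictness choose $w\in(F_0\cap F_{c_{r-1}})\sm F_{u_m}$, and a small variant of the mechanism (now comparing with $F_0\cap F_{u_m}$) shows $w\notin G'$. Part (3) then follows: $\C_{u_1,\ldots,u_m}$ is a localization of the tree $\D_{u_1-1}$, hence a forest by Lemma~\ref{l:localization}, and by (1)--(2) each facet $F_{b_i}\sm G'$ contains the nonempty set $(F_0\cap F_{b_t})\sm G'$, so it is connected, hence a tree.

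Part~(4) --- that $F_{b_t}\sm G'=F_{c_{r-1}}\sm G'$ has a free vertex in $\C_{u_1,\ldots,u_m}$, whence it is a splitting facet by the earlier proposition that a facet with a free vertex is a splitting facet --- is the heart of the matter and is the iterated analogue of Proposition~\ref{p:gDu}(5); I would prove it by the same nested induction (the few cases where $\C_{u_1,\ldots,u_{m-1}}$ has very few facets being immediate). Assuming $F_{c_{r-1}}\sm G'$ has no free vertex in $\C_{u_1,\ldots,u_m}$, one passes to the smaller trees obtained from $\D$ by deleting, one at a time, the two facets $F_{c_{r-2}}$ and $F_{c_{r-3}}$ that immediately precede $F_{c_{r-1}}$ in the order induced on $\C_{u_1,\ldots,u_{m-1}}$; applying the inductive hypothesis there produces free vertices $x$ and $y$ of $F_{c_{r-1}}\sm G'$, and the only way $x$, $y$ can fail to remain free in $\C_{u_1,\ldots,u_m}$ is that $x\in F_{c_{r-1}}\cap F_{c_{r-2}}$ and $y\in F_{c_{r-1}}\cap F_{c_{r-3}}$ while lying in no other facet. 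Then Lemma~\ref{l:three-indices}, applied to the connected (hence tree) subcollection $\tuple{F_{c_{r-3}},F_{c_{r-2}},F_{c_{r-1}}}$ of $\D$, gives $F_{c_{r-2}}\cap F_{c_{r-3}}\subseteq F_{c_{r-1}}$; intersecting with $F_0$ and using $F_0\cap F_{c_{r-3}}\supseteq F_0\cap F_{c_{r-2}}$ yields $F_0\cap F_{c_{r-2}}\subseteq F_0\cap F_{c_{r-1}}$, which together with $F_0\cap F_{c_{r-1}}\subseteq F_0\cap F_{c_{r-2}}$ contradicts the strictness of the good leaf order of $\D$. I expect this last step to be the principal obstacle: the bookkeeping --- correctly choosing which facets of $\D$ to delete to feed the nested induction, and verifying that the ``escaped'' witness vertices can come only from exactly those two facets --- is delicate precisely because, unlike in Proposition~\ref{p:gDu}, the indices $c_{r-1},c_{r-2},c_{r-3}$ need not be consecutive in $\D$, and one has to track how the successive localizations interact with the deletions. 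Once the free vertex is produced, the cited proposition immediately gives that $F_{b_t}\sm G'$ is a splitting facet, completing the proof.
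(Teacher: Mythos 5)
Your parts (1)--(3) are sound and essentially the paper's own argument. The ``mechanism'' you isolate (a vertex of $F_0$ missing from some $F_k$ with $k<u_m$ cannot lie in $F_{u_1}\cup\cdots\cup F_{u_m}$, because the strict chain forces $F_0\cap F_{u_j}\subseteq F_0\cap F_k$) is exactly the inclusion $F_0\cap F_{c_{r-1}}\supsetneq F_0\cap F_{u_m}\supsetneq\cdots\supsetneq F_0\cap F_{u_1}$ that the paper invokes for (1) and (2). Your shortcut for (3) --- the iterated localization is a single localization of the tree $\D_{u_1-1}$, hence a forest by Lemma~\ref{l:localization}, and connected by (2) --- is legitimate and slightly slicker than what the paper writes. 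The paper instead realizes $\C_{u_1,\ldots,u_m}$ as the localization at $\overline{A}$, $A=F_{u_1}\cup\cdots\cup F_{u_m}$, of the subcollection $\Omega=\tuple{F_{\omega_0},\ldots,F_{\omega_p}}$ of all facets of $\D$ not contained in $A$, checking that $\Omega$ is itself a tree with good leaf $F_0=F_{\omega_0}$ and a strict induced good leaf order. That construction is not a detour: it is precisely the device that closes the gap you leave in (4).

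The gap is the one you flag yourself. You organize the argument around the tower $\C_{u_1},\C_{u_1,u_2},\ldots$ and propose deleting $F_{c_{r-2}}$ and $F_{c_{r-3}}$, the predecessors of $F_{c_{r-1}}$ in $\C_{u_1,\ldots,u_{m-1}}$; but these are the wrong facets to delete, and the ``delicate bookkeeping'' you defer is not merely tedious --- it is where your frame breaks. The indices $c_1,\ldots,c_r$ only enumerate facets surviving the first $m-1$ localizations, so a facet $F_k$ of $\D$ with $c_{r-2}<k<c_{r-1}$ and $F_k\not\subseteq A$ may be invisible to your induction and yet be the facet responsible for the witness vertex failing to be free; your assertion that the escaped vertices ``can come only from exactly those two facets'' is then unsupported. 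The paper avoids this entirely: once $\C_{u_1,\ldots,u_m}=\Omega_{\overline{A}}$ is established, part (4) is a rerun of Proposition~\ref{p:gDu}(5) for a \emph{single} localization of the tree $\Omega$. One inducts on $p$, deletes the immediate predecessors $F_{\omega_{p-1}}$ and $F_{\omega_{p-2}}$ \emph{in $\Omega$}, and the escaped witness can only lie in the deleted facet because every facet of $\Omega_{\overline{A}}$ other than $F_{\omega_{p-1}}\sm A$ is still a facet of $(\Omega\rmv{F_{\omega_{p-1}}})_{\overline{A}}$; then Lemma~\ref{l:three-indices} and the strictness of the order on $\D$ give the contradiction exactly as you sketch. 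So your endgame is right, but to make it run you must first collapse the $m$ localizations into one and take predecessors in $\Omega$, not in $\C_{u_1,\ldots,u_{m-1}}$.
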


  \begin{proof} Let $A=F_{u_1} \cup \ldots \cup F_{u_m}$. To show 1, 
   suppose there is an $i<r-1$ such that $F_{c_i}\sm A \subset
   F_{c_{r-1}}\sm A$. By the strict inclusions assumed there exists
   $y\in (F_0\cap F_{c_i}) \sm (F_0\cap F_{c_{r-1}})$. As $$F_0\cap
   F_{c_{r-1}} \supsetneq F_0\cap F_{u_m}\supsetneq \ldots \supsetneq
   F_0\cap F_{u_1},$$ it follows that $y \in F_{c_i}\sm A$ and $y
   \notin F_{c_{r-1}}\sm A$, which is a contradiction.

   For 2 it is easy to see that $$(F_0\cap F_{b_1})\sm A \supseteq
   \ldots \supseteq (F_0\cap F_{b_{t}})\sm A.$$ To show that these
   inclusions are strict pick $1\leq i<j < t$, we know that $$F_0\cap
   F_{b_i} \supsetneq F_0\cap F_{b_j} \supsetneq F_0\cap
   F_{u_m}\supsetneq F_0\cap F_{u_{m-1}}\supsetneq \ldots \supsetneq
   F_0\cap F_{u_1},$$ and therefore there exists $y \in (F_{b_i}\cap
   F_0)\sm (F_{b_j}\cup F_{u_1} \cup \ldots \cup F_{u_{m}})$, which
   means that $y \in (F_0\cap F_{b_i})\sm (F_{u_1} \cup \ldots \cup
   F_{u_{m}})$ and $y \notin (F_0\cap F_{b_j})\sm (F_{u_1} \cup \ldots
   \cup F_{u_{m}})$, proving 2.

    Suppose
    $\Omega=\tuple{F_{\omega_0},F_{\omega_1},\ldots,F_{\omega_p}}$ is
    the subcollection of $\D$ consisting of those facets that
    are not contained in $A$ with $$0={\omega_0} <
    {\omega_1}< \ldots <{\omega_p}.$$ Because of the
    strict good leaf order $\Omega$ is a connected forest and
    hence a tree.

    We claim that $C_{u_1,\ldots,u_m}$ is the localization of the tree
    $\Omega$ at the ideal generated by $\overline{A}$. This follows
    from two observations. One is that if at the $i$th step when
    building $C_{u_1,\ldots,u_m}$ there are facets $F_\alpha, F_\beta
    \in \D$ not containing $F_{u_1}\cup \ldots\cup F_{u_i}$, then
    $F_\alpha, F_\beta$ do not contain $A$ and therefore are also
    facets of $\Omega$. Moreover if $F_\alpha\sm (F_{u_1}\cup
    \ldots\cup F_{u_i}) \subseteq F_\beta \sm (F_{u_1}\cup \ldots\cup
    F_{u_i})$, then $F_\alpha\sm A \subseteq F_\beta \sm A$ and
    therefore we can conclude that $C_{u_1,\ldots,u_m}$ is a
    localization $\Omega$ and $\{b_1 \ldots b_t \} \subseteq
    \{{\omega_0}, \ldots, {\omega_p} \}.$

    So $C_{u_1,\ldots,u_m}$ must be a forest, and since it is
    connected by 2, it must be a simplicial tree. This settles 3.

    By the discussion above we can assume $\omega_p=b_t$ and we will
    still have $C_{u_1,\ldots,u_m}$ is a localization of
    $\Omega$. Also note that $F_0=F_{\omega_0}$ is a good leaf of
    $\Omega$ with a strict good leaf order induced by that on $\D$.

   To prove 4 we use induction on $p$.  If $p=1$ or $2$ then
   $C_{u_1,\ldots,u_m}$ will have one or two facets, and in each case
   $F_{b_t}\sm A$ clearly must have a free vertex. If $p=3$ then
   $F_{\omega_2}$ is a leaf of
   $\Omega_{\omega_2}=\tuple{F_{\omega_0},F_{\omega_1},F_{\omega_2}}$
   with a joint $F_{w_i}$ for some $i<2$. If $F_{\omega_i}\sm A \in
   C_{u_1,\ldots,u_m}$, then it acts as a joint of $F_{\omega_2}\sm A$
   so $F_{\omega_2}\sm A$ is a leaf and must therefore have a free
   vertex. If $F_{\omega_i}\sm A \notin C_{u_1,\ldots,u_m}$, then
   $C_{u_1,\ldots,u_m}$ has at most two facets including
   $F_{\omega_2}\sm A$ each of which must have a free vertex. This
   settles the base cases for induction.

   Now suppose $p\geq 4$ and $F_{b_t}\sm A$ has no free vertex in
   $C_{u_1,\ldots,u_m}$.

   By the induction hypothesis, if we consider $\Gamma=\Omega \sm
   \tuple{F_{\omega_{p-1}}}$ then $F_{\omega_p}$ will have a free vertex $x$ in
   $\Gamma_{\overline{A}}$. If $x$ is not a free vertex in
   $\Gamma_{\overline{A}}$ then $x \in F_{\omega_{p-1}} \sm A \in
   \Gamma_{\overline{A}}$. In other words, $x \in F_{\omega_p} \cap
   F_{\omega_{p-1}}$ and $x \notin F_ {\omega_i}$ for any other $i\leq {p}$.
   
   Similarly, if we remove $F_{\omega_{p-2}}$ from $\Omega$ we will find a vertex
   $y \in F_{\omega_p} \cap F_{\omega_{p-2}}$ and $y \notin F_ i$ for any other
   $i\leq {p}$.
 
   By Lemma~\ref{l:three-indices}, we must then have
   $F_{\omega_{p-2}}\cap F_{\omega_{p-1}} \subseteq
   F_{\omega_p}$. Intersecting both sides with $F_0$ we
   obtain $$F_{\omega_p} \cap F_0 \subseteq F_{\omega_{p-1}} \cap F_0=
   F_{\omega_{p-2}}\cap F_{\omega_{p-1}} \cap F_0 \subseteq
   F_{\omega_p} \cap F_0$$ which means that $F_{\omega_p} \cap F_0 =
   F_{\omega_{p-1}} \cap F_0$; a contradiction. This proves 4 and we are done.
  \end{proof}

  Proposition~\ref{p:gDug} replaces Proposition~\ref{p:gDu} as a more
  general version. Back to (\ref{e:gSF-recursive}), we start computing
  Betti numbers of $\F(\D)$ for a given tree $\D$ with good leaf $F_0$
  and strict good leaf order
   $$F_0\cap F_1 \supsetneq F_0 \cap F_2 \supsetneq \cdots \supsetneq
  F_0\cap F_q.$$ The formula
 \begin{align*}\beta_{i,j}(\F(\D))=\beta_{i,j}(\F(\langle F_0 \rangle))+\sum_{u=1}^{q}\beta_{i-1,j-|F_{u}|}(\F((\D_{u-1})_{\overline{F_u}}))%\label{e:number-this}
\end{align*}
  becomes recursive, since in each step after localization we
  again have a simplicial tree with a strict induced order on the facets
  where the last facet remaining is a splitting facet.

  To close, we apply the formula to examine some low Betti numbers.

  Let $i=0$. By (\ref{e:gSF-recursive}) and (\ref{e:gb-1}) we have
  \begin{align*}
\beta_{0,j}(\F(\D))=\sum_{u=0}^{q}\delta_{j,|F_u|}.%\label{e:gcase0}
\end{align*}

Let $i\geq 1$. Because of (\ref{e:gSF-recursive}) and (\ref{e:gone-gen})
we can write
\begin{align}\beta_{i,j}(\F(\D))&
=\sum_{u=1}^{q}\beta_{i-1,j-|F_{u}|}(\F((\D_{u-1})_{\overline{F_u}}))
\label{e:gcase1-prelim}\end{align}

From Proposition~\ref{p:gDug} and (\ref{e:gcase1-prelim}) we can see
that we need the generators of each $\D_u$ in order to produce a
formula for the first graded Betti numbers. To this end, we start from
$\D_u=\langle F_0,\ldots,F_u\rangle$ so that
\begin{align*}
(\D_{u-1})_{\overline{F_u}}&=\langle F_i\sm F_u \st 0\leq i <u \mbox{
    and } (F_j\sm F_u) \not \subseteq (F_i\sm F_u) \mbox{ for } j\neq
  i\rangle\\ &\\ &=\langle F_i\sm F_u \st 0\leq i <u \mbox{ and }
  \frac{\lcm(F_j,F_u)}{F_u}\ndiv \ \frac{\lcm(F_i,F_u)}{F_u} \mbox{
    for } j\neq i\rangle\\ &\\ &=\langle F_i\sm F_u \st 0\leq i <u
  \mbox{ and } \lcm(F_j,F_u)\ndiv \ \lcm(F_i,F_u) \mbox{ for } j\neq
  i\rangle\\
\end{align*}

So we can make our ``delta-function'' to have the $\lcm$ condition
built into it. We define
$$\delta_{a,(b,c)}=\left \{ 
\begin{array}{ll}
1&a=|F_b|,\ \lcm(F_d,F_c) \ndiv \ \lcm(F_b,F_c) \mbox{ for } 0\leq d<c\\
0&\mbox{otherwise}
\end{array}\right.$$

So (\ref{e:gcase1-prelim}) becomes
\begin{align*}\beta_{1,j}(\F(\D))&
=\sum_{u=1}^{q}\beta_{0,j-|F_{u}|}(\F((\D_{u-1})_{\overline{F_u}}))\notag\\
&=\sum_{u=1}^{q} \sum_{\tiny{F \mbox{ facet of } (\D_{u-1})_{\overline{F_u}}}}\delta_{j-|F_u|,|F|}\notag\\
&=\sum_{u=1}^{q} \sum_{v=0}^{u-1}\delta_{j-|F_u|,(v,u)}
\label{e:gcase1}
\end{align*}

By building appropriate delta functions, one can continue in this
manner to build further Betti numbers based on the $\lcm$s of the
facets.

%%%%%%%%%%%%%%%%%%%%%%%%%%%%%%%%%%%%%%%%%%%%%%%%%%%%%%%%%%%%
%%%%%%%%%%%%%%%%%%%%%%%%%%%%%%%%%%%%%%%%%%%%%%%%%%%%%%%%%%%%

\end{document}